\newtheorem{theorem}{Theorem}
\newtheorem{lemma}{Lemma}
\newtheorem{corollary}{Corollary}
\newtheorem{claim}{Claim}
\newcommand{\per}{{\rm per}}
\newcommand{\qed}{\hfill\rule{0.5em}{0.809em}}
\def\bar{\overline}
\newenvironment{proof}{
\par
\noindent {\bf Proof.}\rm}{\mbox{}\hfill\rule{0.5em}{0.809em}\par}
\begin{document}
\title{Total weight choosability of $d$-degenerate graphs}
\author{
Tsai-Lien Wong \thanks{Department of Applied Mathematics, National
Sun Yat-sen University, Kaohsiung, Taiwan 80424. Grant numbers: MOST
104-2115-M-110 -001 -MY2. Email: tlwong@math.nsysu.edu.tw} \and
Xuding Zhu
\thanks{Department of Mathematics, Zhejiang Normal University,
China. Grant number:
  NSFC 11571319.  Email: xudingzhu@gmail.com. }
        \\[0.2cm]
       }


\maketitle

\begin{abstract}

A graph $G$ is $(k,k')$-choosable if the following holds:
For any list assignment  $L$ which assigns to each vertex
$v$   a set $L(v)$ of $k$ real numbers, and assigns to each edge
$e$ a set $L(e)$ of $k'$ real numbers, there is a total weighting
 $\phi: V(G) \cup E(G) \to R$ such that $\phi(z) \in L(z)$ for $z \in V \cup E$,
and $\sum_{e \in E(u)}\phi(e)+\phi(u) \ne \sum_{e \in
E(v)}\phi(e)+\phi(v)$ for every edge $uv$. This paper proves the
following results: (1) If $G$ is a connected $d$-degenerate graph,
and $k>d$ is a prime number, and $G$ is either non-bipartite or has
two non-adjacent vertices $u,v$ with  $d(u)+d(v) < k$,   then $G$ is
$(1,k)$-choosable. As a consequence, every planar graph with no
isolated edges is $(1,7)$-choosable, and every connected
$2$-degenerate non-bipartite graph other than $K_2$ is
$(1,3)$-choosable. (2)   If $d+1$ is a prime number,   $v_1, v_2,
\ldots, v_n$ is an ordering of the vertices of $G$ such that each
vertex $v_i$ has back degree $d^-(v_i) \le d$, then there is a graph
$G'$ obtained from $G$ by adding at most $d-d^-(v_i)$ leaf
neighbours to $v_i$ (for each $i$) and $G'$ is $(1,2)$-choosable.
(3) If $G$ is $d$-degenerate and $d+1$ a prime, then $G$ is
$(d,2)$-choosable. In particular,  $2$-degenerate graphs are
$(2,2)$-choosable. (4) Every  graph is $(\lceil\frac{{\rm
mad}(G)}{2}\rceil+1, 2)$
-choosable. In
particular, planar graphs are $(4,2)$-choosable, planar bipartite
graphs are $(3,2)$-choosable.

\end{abstract}
{\small \noindent{{\bf Key words: }  Total weighting; $(k,k')$-choosable graphs;
permanent; $d$-degenerate graphs.}

\section{Introduction}

 A {\em total weighting} of a graph $G$ is a mapping
$\phi: V(G) \cup E(G) \to R$. A total weighting $\phi$ is {\em proper}
if for any edge $uv$ of $G$,
$$\sum_{e \in E(u)}\phi(e) + \phi(u) \ne \sum_{e\in E(v)}\phi(e) + \phi(v),$$
where $E(v)$ is the set of edges incident to $v$.
Total weighting of graphs has attracted considerable recent attention   \cite{KLT2004, Add2005,Add2007,WY2008,BGN09,KKP10,PW2010,PW2011,WZ11,ZW2012}.

The well-known 1-2-3 conjecture, proposed by Karo\'{n}ski, {\L}uczak
and  Thomason   \cite{KLT2004}, asserts that every graph with no
isolated edge has a proper total weighting $\phi$ with $\phi(v)=0$
for every vertex and $\phi(e) \in \{1,2,3\}$ for every edge $e$. The
conjecture has been studied by many authors
\cite{Add2005,Add2007,WY2008} and the current best result is that
the conjecture would be true if instead of   $\{1,2,3\}$, every edge
$e$ can have weight $\phi(e) \in \{1,2,3,4,5\}$ \cite{KKP10}. The
1-2 conjecture, proposed by    Przyby{\l}o and  Wo\'{z}niak in
\cite{PW2010}, asserts that every graph $G$ has a proper total
weighting $\phi$ with $\phi(z) \in\{1,2\}$ for all $z \in V(G)\cup
E(G)$. The best result on this conjecture
is that every graph $G$ has a proper
total weighting $\phi$ with $\phi(v) \in \{1,2\}$ for $v \in V(G)$
and $\phi(e) \in \{1,2,3\}$ for $e \in E(G)$ \cite{K2008}.

Total weighting of graphs is naturally extended to the list version,
independently  by Przyby{\l}o and Wo\'{z}niak \cite{PW2011}  and by
Wong and Zhu \cite{WZ11}. Suppose $\psi: V(G) \cup E(G) \to
\{1,2,\ldots,\}$ is a mapping which assigns to each vertex and each
edge of $G$ a positive integer. A $\psi$-list assignment of $G$ is a
mapping $L$ which assigns to $z \in V(G) \cup E(G)$ a set $L(z)$ of
$\psi(z)$ real numbers. Given a total list assignment $L$, a proper
$L$-total weighting is a proper total weighting $\phi$ with $\phi(z)
\in L(z)$ for all $z \in V(G) \cup E(G)$. We say $G$ is {\em total
weight $\psi$-choosable} if for any $\psi$-list assignment $L$,
there is a proper $L$-total weighting of $G$. We say $G$ is
$(k,k')$-choosable if $G$ is $\psi$-total weight choosable, where
$\psi(v)=k$ for $v \in V(G)$ and $\psi(e) = k'$ for $e \in E(G)$.

As strengthenings of the 1-2-3 conjecture and the 1-2 conjecture, it
was conjectured in \cite{WZ11} that  every graph with no isolated
edges is $(1,3)$-choosable and every graph is $(2,2)$-choosable.
Some special graphs are shown to be $(1,3)$-choosable, such as
complete graphs, complete bipartite graphs, trees \cite{BGN09},
Cartesian product of an even number of even cycles, of  a path and
an even cycle,  of two paths \cite{WWZ2012}. Some special graphs are
shown to be $(2,2)$-choosable, such as complete graphs, generalized
theta graphs, trees \cite{WZ11}, subcubic graphs, Halin graphs
\cite{WZ2012}, complete bipartite graphs \cite{WYZ08}.

It was shown
in \cite{ZW2012} that every graph is $(2,3)$-choosable. However, it
is unknown whether there is a constant $k$ such that every graph with no
isolated edge is $(1,k)$-choosable, and whether there is a constant $k$ such that
every graph is $(k,2)$-choosable.

For graphs $G$ of maximum degree $k$ with no isolated edges,  it was proved by Seamone
\cite{Sea2014} that   $G$   is $(1, 2k+1)$-choosable, by Wang and Yan \cite{2014wy} that $G$ is
  $(1,\lceil\frac{4k+8}{3}\rceil$)-choosable, and recently,
 it is proved  in \cite{DDWWWYZ} that  $G$ is
$(1, k+1)$-choosable. In this paper, we first consider
connected $d$-degenerate graphs $G$. We prove that if $k > d \ge 2$ and either $G$ is non-bipartite
or $G$ is bipartite and there are two non-adjacent vertices
  $u,v$  with $d(u)+d(v) < k$,
then $G$ is $(1,k)$-choosable.
 As a consequence, every planar graph with no isolated edges is
$(1,7)$-choosable, and every connected $2$-degenerate non-bipartite
graph other than $K_2$ is $(1,3)$-choosable. Next we prove that if
$d+1$ is a prime number and $G$ is a $d$-degenerate graph, $v_1,
v_2, \ldots, v_n$ is an ordering of the vertices of $G$ such that
each vertex $v_i$ has back degree $d^-(v_i) \le d$, then there is a
graph $G'$ obtained from $G$ by adding at most $d-d^-(v_i)$ leaf
neighbours to $v_i$ (for each $i$) and $G'$ is $(1,2)$-choosable. In
particular, if $d+1$ is a prime number, and $G$ is a $d$-tree, then
for any $d$-clique $K$ of $G$, there is a graph $G'$ obtained from
$G$ by adding at most $j$   leaf neighbours to the $j$th vertex of
$K$ so that the resulting graph is $(1,2)$-choosable.

For $(k, 2)$-choosability,  we prove that if $G$ is
$d$-degenerate and $d+1$ a prime, then $G$ is $(d,2)$-choosable. In
particular, $2$-degenerate graphs are $(2,2)$-choosable.  In the
last section, we prove that
every  graph is $(
\lceil{mad(G)/2}\rceil +1, 2)$-choosable. In particular, planar
graphs are $(4,2)$-choosable, planar bipartite graphs are
$(3,2)$-choosable.

\section{$(1,k)$-choosability}

This section proves the  following result.
\begin{theorem}
\label{main}
Assume $G$ is a connected $d$-degenerate graph, $k > d \ge 2$ is a prime number and
 one of the following holds:
\begin{itemize}
\item $G$ is non-bipartite.
 \item $G$ is bipartite,and there are two non-adjacent vertices $u, v$ with $d(u)+d(v) <k$.
\end{itemize}
Then $G$ is $(1,k)$-choosable.
\end{theorem}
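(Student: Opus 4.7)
The plan is to apply Alon's Combinatorial Nullstellensatz. With vertex weights fixed to $\phi(v) \in L(v)$ (unique since $|L(v)| = 1$), consider the polynomial
\[
P(\mathbf{x}) \;=\; \prod_{uv \in E(G)} \bigl(C_u - C_v\bigr), \qquad C_v \;=\; \phi(v) + \sum_{e \ni v} x_e.
\]
A proper $L$-total weighting is exactly an assignment $x_e = \phi(e) \in L(e)$ with $P(\phi) \neq 0$, so it suffices to find a monomial $\prod_e x_e^{d_e}$ of $P$ with every $d_e \le k - 1$ and non-zero coefficient. Since $\deg P = |E(G)|$, the coefficient of any such top-degree monomial equals its coefficient in the homogeneous part $P^{\top} = \prod_{uv} \bigl(\sum_{e \ni u} x_e - \sum_{e \ni v} x_e\bigr)$, in which each $x_{uv}$ has cancelled inside its own factor.

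Fix a $d$-degenerate ordering $v_1, \ldots, v_n$ with back-degrees $d^-(v_j) \le d$, orient every edge from its later to its earlier endpoint, and let $B_j$ denote the set of back-edges at $v_j$. Expanding $P^{\top}$, the coefficient of a monomial is a signed sum over selection maps $\sigma : E \to E$, where $\sigma(f)$ is incident to an endpoint of $f$ and carries sign $+1$ or $-1$ according to whether that endpoint is the later or the earlier one. I will target the monomial produced by the selections that satisfy $\sigma(f) \in B_j$ for every $f \in B_j$: these all contribute sign $+1$ and yield exponents bounded by $|B_j| \le d \le k - 1$, so the Nullstellensatz degree constraint is automatic. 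The main combinatorial step is to show that the full coefficient of this target monomial, which also receives contributions from selections that escape some $B_j$ (choosing a forward edge of $v_j$, or an edge at the earlier endpoint with sign $-1$), is non-zero modulo the prime $k$. Primality of $k$ together with the bound $d < k$ should let me group the stray selections into orbits whose signed contribution is divisible by $k$, leaving a principal term that is a product of local derangement-style counts on the blocks $B_j$.

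Finally, the bipartite versus non-bipartite dichotomy enters through the global shift $x_e \mapsto x_e + t$, which preserves $P^{\top}$ whenever $G$ is regular bipartite and can therefore force the targeted coefficient to vanish even in $\mathbb{Z}$. When $G$ is non-bipartite, an odd closed walk breaks this shift symmetry and the principal product survives modulo $k$. When $G$ is bipartite, I instead multiply $P$ by the auxiliary factor $(C_u - C_v)$ for the given non-adjacent pair $u,v$; this increments by at most one the exponents of the $d(u) + d(v)$ edges incident to $u$ or $v$, and the hypothesis $d(u) + d(v) < k$ is exactly what guarantees every exponent still respects $d_e \le k - 1$. The principal obstacle will be the modular arithmetic of the stray-selection corrections, together with verifying that the chosen symmetry-breaking device --- the odd cycle in one case, the auxiliary factor $(C_u - C_v)$ in the other --- actually delivers a residue that is genuinely non-zero modulo $k$.
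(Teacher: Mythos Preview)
Your framework (Combinatorial Nullstellensatz, working modulo the prime $k$, exploiting a $d$-degenerate ordering) is the same as the paper's, but the specific combinatorial step you sketch has a genuine gap. You never pin down a single target monomial, and the ``principal term'' you describe --- selections with $\sigma(f)\in B_j$ for every $f\in B_j$ --- is forced to be zero whenever some block $B_j$ has size~$1$: since $x_f$ cancels in its own factor, the unique edge $f$ of a singleton block cannot select anything in $B_j$ at all. Blocks of size~$1$ are ubiquitous in $d$-degenerate graphs (every tree, every vertex of back-degree~$1$), so your scheme produces no surviving principal contribution to argue about. The promised ``orbits whose signed contribution is divisible by $k$'' are only asserted, not constructed, and without a nonzero principal term there is nothing for them to leave behind. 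Your remark about the shift $x_e\mapsto x_e+t$ is also off: that shift changes $C_u-C_v$ by $t(d(u)-d(v))$, which concerns regularity rather than bipartiteness, so it does not explain the actual obstruction in the bipartite case.

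The paper avoids the self-selection problem entirely by switching from edge columns to \emph{vertex} columns. In the non-bipartite case it takes the matrix with $d^-(v_i)$ copies of $2A_G(v_i)$, whose permanent is exactly $\pm 2^{|E|}\prod_i d^-(v_i)!$, nonzero modulo $k$ since $k>d\ge 2$. The odd cycle is then used only to rewrite each $2A_G(v)$ as an integral combination of edge columns; multilinearity of the permanent plus the observation that any edge column appearing $\ge k$ times kills the permanent modulo $k$ finishes the argument. In the bipartite case the paper places the special pair $u,v$ last and uses columns $A_G(v_i)\pm A_G(v)$ (sign by parity of distance), giving permanent $\pm(\prod_i d^-(v_i)!)\cdot(d(u)+d(v))!$; the hypothesis $d(u)+d(v)<k$ makes that last factorial nonzero modulo $k$. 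This is where the hypothesis truly enters --- not as a degree bound after multiplying by an auxiliary factor $C_u-C_v$ as you propose. If you want to salvage your direct-selection approach, you would need to replace the block-internal scheme by one that tolerates singleton blocks; the paper's vertex-column trick is precisely such a replacement.
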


For each $z  \in V(G) \cup E(G)$, let $x_z$
be a variable associated to $z$. Fix an arbitrary orientation $D$ of $G$.
Consider the
polynomial
$$P_G(\{x_z: z \in V(G) \cup E(G)\})
= \prod_{uv \in E(D)}\left(  \left(\sum_{e \in E(u)} x_e+ x_u\right)
- \left(\sum_{e \in E(v)} x_e+ x_v\right)\right).$$
Assign a real number $\phi(z)$ to the variable $x_z$, and view
$\phi(z)$ as the weight of $z$.
Let $P_G( \phi  )$ be the evaluation of the
polynomial at $x_z = \phi(z)$. Then $\phi$ is a proper total
weighting of $G$ if and only if $P_G( \phi) \ne 0$.
The question  is under what condition one can find an assignment $\phi$
for which $P_G( \phi) \ne 0$.

An {\em index function} of $G$ is a mapping $\eta$ which
assigns to each vertex or edge $z$ of $G$ a non-negative integer $\eta(z)$.
An index function $\eta$ of $G$ is {\em valid} if $\sum_{z \in V \cup E}\eta(z) = |E|$.
Note that $|E|$ is the degree of the polynomial
$P_G(\{x_z: z \in V(G) \cup E(G)\})$.
For a valid index function $\eta$, let
$c_{\eta}$ be the coefficient of the monomial
 $\prod_{z \in V \cup E} x_{z}^{\eta(z)}$ in the expansion of $P_G$.
It follows from the Combinatorial Nullstellensatz
\cite{nullstellensatz,AlonTarsi} that if $c_{\eta} \ne 0$, and
$L$ is a list assignment which assigns to each $z \in V(G) \cup E(G)$ a
set $L(z)$ of  $ \eta(z)+1$ real numbers, then  there exists a
mapping $\phi$ with  $\phi(z) \in L(z)$   such that $$P_G(\phi) \ne
0.$$
An index function $\eta$ of $G$ is called {\em non-singular} if there is a valid index function
$\eta' \le \eta$ (i.e., $\eta'(z) \le \eta(z)$ for all $z \in V(G) \cup E(G)$) such that
$c_{\eta'} \ne 0$.

The main result of this section, Theorem \ref{main}, follows from  Theorem \ref{main0}.
\begin{theorem}
\label{main0}
Assume $G$ is a connected $d$-degenerate graph, $k >d \ge 2$ is a prime number and
 one of the following holds:
\begin{itemize}
\item $G$ is non-bipartite.
 \item $G$ is bipartite,  and there are two non-adjacent vertices $u, v$ with $d(u)+d(v) <k$.
\end{itemize}
Then $G$ has a non-singular index function $\eta$ with $\eta(v) = 0
$ for $v \in V(G)$ and $\eta(e) \le k-1$ for $e \in E(G)$.
\end{theorem}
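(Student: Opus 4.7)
The plan is to apply the Combinatorial Nullstellensatz via the permanent interpretation of the coefficients of $P_G$ set up in the excerpt. Fix a $d$-degeneracy ordering $v_1,\dots,v_n$ of $G$ so that each $v_j$ has back-degree $d_j := d^-(v_j)\le d$, and orient every edge from its lower-indexed endpoint to its higher-indexed one. Let $F_j$ denote the set of back edges at $v_j$, so $E(G)=\bigsqcup_{j\ge 2}F_j$ with $|F_j|=d_j$. A valid index function $\eta$ with $\eta(v)=0$ for all $v$ has coefficient $c_\eta=\per(M_\eta)/\prod_e\eta(e)!$, where $M$ is the signed factor--variable matrix with $M[e,e']=0$ if $e=e'$, $+1$ if $e'$ is incident to the tail of $e$ only, and $-1$ if $e'$ is incident to the head of $e$ only, and $M_\eta$ replicates each column $e$ exactly $\eta(e)$ times. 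Since $k$ is prime and $\eta(e)\le k-1$, the denominator is invertible modulo $k$, so it suffices to produce $\eta$ with $\per(M_\eta)\not\equiv 0\pmod{k}$.

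I would take the uniform choice $\eta(e)=1$ as a first attempt and analyze $\per(M)$ block by block along the ordering. On the diagonal block $F_j\times F_j$, the matrix has $0$ on the diagonal (each factor $L_e$ excludes its own edge) and $-1$ off-diagonal (every two edges in $F_j$ share the head $v_j$), so its permanent equals $(-1)^{d_j}D_{d_j}$, where $D_{d_j}$ is the derangement number; this is nonzero modulo the prime $k$ whenever $d_j\ge 2$. The ordering also controls the off-diagonal blocks: a factor $L_e$ with $e\in F_j$ has nonzero entries only in columns indexed by edges incident to $v_j$ or a back neighbor of $v_j$, so reordering by the degeneracy index makes $M$ block-triangular in a suitable sense, and $\per(M)\bmod k$ reduces to the product of the diagonal permanents.

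The main obstacle is a vertex $v_j$ with $d_j=1$: $D_1=0$ kills the product, and such ``pendants in the peeling'' are generally unavoidable (e.g.\ $v_2$ in any ordering of a triangle). The hypothesis of the theorem is designed precisely to defeat this obstruction. For such a $v_j$, I would locally modify the index function by setting $\eta(e_j)=2$ on its back edge (allowed since $k\ge 3$, so $k-1\ge 2$) and compensating by lowering $\eta$ by one on a judiciously chosen edge in a later block; this forces the pendant factor $L_{e_j}$ to pick a variable outside $F_j$ and restores a nonzero local contribution. The case split ensures this redistribution can be arranged throughout the graph: in the non-bipartite case, an odd cycle supplies the structural room (and the correct parity) to route the adjustments without cancellation; in the bipartite case, the prescribed non-adjacent pair $u,v$ with $d(u)+d(v)<k$ supplies slack of at least $k-d(u)-d(v)\ge 1$ near $\{u,v\}$, enough to terminate the redistribution without violating $\eta(e)\le k-1$.

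The hard part will be establishing that the pendant redistribution can be carried out globally and consistently, since multiple pendant-like vertices in the peeling may interact; this is where the odd cycle (respectively, the non-adjacent pair) is essential to close the chain of corrections. Once every (possibly modified) diagonal block has a nonzero permanent modulo $k$, the block-triangular structure yields $\per(M_\eta)\not\equiv 0\pmod{k}$, and the resulting $\eta$ is non-singular with $\eta(v)=0$ and $\eta(e)\le k-1$, as required.
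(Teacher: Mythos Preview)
Your proposal contains a genuine gap: the block-triangularity claim is false.  You assert that ``reordering by the degeneracy index makes $M$ block-triangular in a suitable sense,'' but for the edge-column matrix this simply does not hold.  If $e=(v_i,v_j)\in F_j$, then the row of $M$ indexed by $e$ has nonzero entries at every edge incident to $v_i$ or $v_j$.  In particular it sees all back edges of $v_i$ (columns in $F_i$, with $i<j$) \emph{and} all forward edges of $v_j$ (columns in $F_{j'}$ with $j'>j$).  So $M$ is neither block upper nor block lower triangular, and the factorisation $\per(M)\equiv\prod_j(-1)^{d_j}D_{d_j}\pmod k$ is unjustified.  Already for the triangle on $v_1,v_2,v_3$ with $\eta\equiv 1$ one computes $\per(M)=0$, and this is not explained by the single pendant block $F_2$; the off-diagonal interaction is essential.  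Once the block structure collapses, the rest of the plan --- the ``local redistribution'' of $\eta$ along pendant edges and the vague routing via an odd cycle or the non-adjacent pair --- has no foundation to stand on, and no concrete mechanism is offered for why these modifications would produce $\per(M_\eta)\not\equiv 0\pmod k$.

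The paper's proof avoids this difficulty by working first with \emph{vertex} columns rather than edge columns.  The matrix with $d^-(v_j)$ copies of $A_G(v_j)$ \emph{is} block triangular along the degeneracy ordering, with diagonal blocks that are all-$(\pm1)$ of size $d_j\times d_j$, so its permanent is $\pm\prod_j d_j!\not\equiv 0\pmod k$.  The hypotheses then enter in a clean algebraic way: in the non-bipartite case one uses $2A_G(v)$ and the odd cycle to write each such column as an integral combination of edge columns; in the bipartite case one uses $A_G(v_j)\pm A_G(v)$ and the path to the designated pair $u,v$, with $d(u)+d(v)<k$ guaranteeing the leftover factorial is still invertible mod $k$.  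Finally, multilinearity expands everything into pure edge-column matrices, one of which must have permanent $\not\equiv 0\pmod k$; primality of $k$ then forces every edge column to appear at most $k-1$ times in that term.  Your instinct that the case split supplies ``structural room'' is correct in spirit, but the room is used to rewrite vertex columns as edge columns, not to patch a failed block computation.
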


We write the polynomial $P_G(\{x_z: z \in V(G) \cup E(G)\})$  as
$$P_G(\{x_z: z \in V(G) \cup E(G)\}) = \prod_{e \in E(D)}\sum_{z \in V(G) \cup E(G)}A_G[e,z]x_z.$$
It is straightforward to verify that for $e \in E(G)$ and $z \in V(G) \cup E(G)$,
if $e=(u,v)$ (oriented from $u$ to $v$), then
\begin{equation*}
A_G[e,z]=
\begin{cases} 1 & \text{if $z=v$, or $z \ne e$ is an edge incident to $v$,}
\\
-1 & \text{if $z=u$, or $z \ne e$ is an edge incident to $u$,}
\\
0 &\text{otherwise.}
\end{cases}
\end{equation*}
Now $A_G $ is a matrix, whose rows  are indexed by   edges of $G$
and the columns are indexed by edges and vertices of $G$. Given a
vertex or an edge $z$ of $G$, let $A_G(z)$ be the column of $A_G$
indexed by $z$.  As observed in \cite{WZ11}, for an edge $e=uv$   of $G$, we have
$$A_G(e)=A_G(u)+A_G(v). \eqno(1)$$

For an index function $\eta$ of $G$, let $A_G(\eta)$ be the
 matrix, each of
its column is a column of $A_G$, and each column $A_G(z)$ of $A_G$
occurs $\eta(z)$ times as  a column of $A_G(\eta)$.
For   $e \in E(G)$ and $z \in E(G) \cup V(G)$ with $\eta(z) \ge 1$,
$A_G[e,z]$ denote the entry of $A_G(\eta)$ at row $e$ and column $z$, and $A_G[\overline{e},\overline{z}]$
denotes the matrix obtained from $A_{G }(\eta)$ by deleting the row indexed by $e$ and  a column indexed by $z$.

It is known
\cite{alontarsi1989}
and easy to verify that for a valid
index function $\eta$ of $G$, $c_{\eta} \ne 0$ if and only if
$\per(A_G(\eta)) \ne 0$ (here $\per(A_G(\eta))$ denotes the permanent of $A_G(\eta)$).
Thus a valid index function $\eta$ of $G$ is
non-singular if and only if  $\per(A_G(\eta)) \ne 0$.

It is well-known (and follows easily from the definition) that
the permanent of a matrix is multi-linear on its column vectors (as well as its row vectors):
If a column   $C$ of  $A$ is
a linear combination of two columns vectors $C =\alpha C'+ \beta C''$,
 and $A'$ (respectively, $A''$) is obtained from $A$ by replacing the
column  $C$ with $C'$ (respectively,  with
$C''$), then
$${\rm per}(A) = \alpha {\rm per}(A') + \beta {\rm per}(A''). \eqno(2)$$

Assume $A$ is a square matrix whose columns are linear combinations
of columns of $A_G$. Define an index function $\eta_A: V(G) \cup
E(G) \to \{0,1, \ldots, \}$ as follows:

For $z \in V(G) \cup E(G)$,  $\eta_A(z)$ is  the number of columns
of $A$ in which $A_G(z)$
appears in the linear combinations with nonzero coefficient.

Note that the columns of $A_G$ are not linearly independent. There are different ways
of expressing the columns of a same matrix $A$
as linear combination of columns of $A_G$. So $\eta_A$ is not uniquely determined by the matrix
$A$ itself, instead it depends on
how its columns are expressed as linear combinations of columns of $A_G$.
For simplicity, we use the notation $\eta_A$, and each time the function $\eta_A$ is used,
it refers to an explicit expression of the columns of $A$ as linear combinations of columns of $A_G$.
In particular, for an index function $\eta$  of $G$, we may write a column of $A_G(\eta)$ as a linear combination
of other columns of $A_G$, and  $\eta_{A_G(\eta)} $ may become another index function of $G$.

To prove that a graph is $(1,k)$-choosable, it suffices to find a square matrix
$A$ with $\per(A) \ne 0$ whose columns are linear combinations of columns of $A_G$ such
that for each $v \in V(G)$, ${\eta}_A(v) = 0$, and for each edge
$e$ of $G$, ${\eta}_A(e) \le k-1$.

\begin{lemma}
\label{key}
Assume $G$ is a connected $d$-degenerate graph, $k >d \ge 2$ is a prime number and
 one of the following holds:
\begin{itemize}
\item $G$ is non-bipartite.
 \item $G$ is bipartite and there are two non-adjacent vertices $u, v$ with $d(u)+d(v) <k$.
\end{itemize}
Then there is a matrix $A$ whose columns are integral linear combinations (i.e., linear combination with integer coefficients)
of edge columns of $G$ such that
$\per(A) \ne 0 \pmod{k}$.
\end{lemma}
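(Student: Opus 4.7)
The plan is to construct an explicit $|E| \times |E|$ matrix $A$ whose columns are integer linear combinations of the edge columns $A_G(e)$, and then to compute $\per(A)$ modulo $k$ to show it is nonzero. Two structural ingredients drive the construction. First, $d$-degeneracy provides an ordering $v_1, \ldots, v_n$ of $V(G)$ with $d^-(v_i) \le d$ for all $i$; using connectedness, we may further arrange $d^-(v_i) \ge 1$ for $i \ge 2$. Second, in the non-bipartite case, any odd cycle $C$ through a chosen vertex $v_1$ yields an identity of the form $\sum_{e \in C}(-1)^{\epsilon(e)} A_G(e) = 2\,A_G(v_1)$ for an appropriate sign pattern $\epsilon$; since $k > 2$ is prime, $2$ is invertible modulo $k$, so $A_G(v_1)$ is effectively accessible as an integer combination of edge columns modulo $k$. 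In the bipartite case I take $v_1 = u$ for the prescribed non-adjacent pair $u, v$ with $d(u)+d(v) < k$, and use the analogous even-path identity $\sum_{f} (-1)^{\epsilon(f)} A_G(f) = A_G(u) \pm A_G(v)$ along a $u$--$v$ path in $G$.

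For the construction, I would assign to each back edge $e_{i,j}$ (with $i \ge 2$ and $1 \le j \le d^-(v_i)$) one column $C_{i,j}$ of $A$. The simplest prototype is $C_{i,j} = A_G(e_{i,j})$, which makes $A$ equal to the edge-column matrix $B$; this already suffices whenever $\per(B) \not\equiv 0 \pmod k$, but small examples (such as $C_5$ with $k=3$, where the two rotational orientations give contributions $+1$ and $-1$ that cancel) show that modifications are sometimes required. I would modify a few columns near $v_1$ (or near the pair $u,v$) using the cycle/path identity above, for instance replacing $C_{i,1}$ by $\sum_{j'} A_G(e_{i,j'})$ or by an alternating cycle sum, so as to break any offending symmetry. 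Expanding $\per(A)$ by multi-linearity in each column rewritten in the vertex basis gives
\[
\per(A) \ = \ \sum_\chi \Bigl(\prod_{(i,j)} \alpha_\chi(i,j)\Bigr)\, \per\!\bigl(A_G(\sigma_\chi)\bigr),
\]
where $\chi$ ranges over ``vertex-choice'' functions selecting one vertex per column, and $\sigma_\chi$ records the resulting multi-set of vertex columns. Each inner permanent $\per(A_G(\sigma_\chi))$ is $\prod_v m_\chi(v)!$ times a signed count of orientations of $G$ whose in-degree sequence matches the multiplicity profile $m_\chi$; and any $\chi$ with some multiplicity $\ge k$ contributes $0$ modulo $k$, because $k$ is prime and $k \mid k!$.

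The crux --- and the main obstacle --- is showing that after all cancellations, the surviving terms sum to a nonzero element modulo $k$. The degeneracy bound $d^-(v_i) \le d < k$ ensures each factorial $d^-(v_i)!$ is a unit modulo $k$, so the surviving contributions are naturally indexed by orientations of $G$ whose in-degree sequences are close to that of the degeneracy orientation, and the dominant term reduces to the form $\pm u \cdot N \pmod k$ with $u$ a unit and $N$ a signed count of such orientations. In the non-bipartite case, the extra factor $2^{-1}$ provided by the odd-cycle identity, combined with a parity-based sign computation along $C$, yields $N \not\equiv 0 \pmod k$. In the bipartite case the analogous count reduces to a binomial-type coefficient involving $d(u) + d(v)$, which is nonzero modulo $k$ precisely because $d(u) + d(v) < k$ and $k$ is prime (via Lucas' theorem, or direct inspection of a single $\binom{d(u)+d(v)}{j}$ for an appropriate $j$).
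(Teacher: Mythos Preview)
Your proposal identifies several of the right ingredients --- the odd-cycle identity expressing $2A_G(v)$ as an alternating edge-column sum, the path identity for $A_G(u)\pm A_G(v)$, and the observation that factorials below $k$ are units modulo the prime $k$ --- but it stops short of the decisive construction, and the step you yourself flag as ``the crux'' is never carried out. Starting from the edge-column matrix $B$ and then trying to repair it by modifying ``a few columns near $v_1$'' leads, as you note, to a multilinear expansion with many surviving vertex-choice terms whose signed sum you cannot control; the hand-waving about a ``dominant term'' and a ``binomial-type coefficient'' does not constitute an argument, and indeed your $C_5$ example already shows that local tweaks do not obviously force $\per(A)\not\equiv 0$.

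The missing idea is to reverse the order of the two steps. In the non-bipartite case, the odd-cycle identity plus a path argument shows that $2A_G(w)$ is an integral combination of edge columns for \emph{every} vertex $w$, not just one near the cycle. Hence one may take $A$ to consist of $d^-(v_i)$ copies of $2A_G(v_i)$ for each $i$; this is \emph{a priori} a matrix of edge-column combinations, and its permanent is computed directly (by induction on $n$, peeling off the last vertex) as $|\per(A)|=2^m\prod_i d^-(v_i)!$, a unit modulo $k$ since $k>d\ge d^-(v_i)$ and $k>2$. In the bipartite case one orders $V(G)\setminus\{u,v\}$ as $v_1,\ldots,v_{n-2}$ with back degrees $\le d$, and takes $A$ to have $d^-(v_i)$ copies of $A_G(v_i)\pm A_G(v)$ for $i\le n-2$ together with $d(u)+d(v)$ copies of $A_G(u)\pm A_G(v)$, the signs chosen by parity of distance so that each column is an alternating path sum of edge columns. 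Then $|\per(A)|=(\prod_{i\le n-2} d^-(v_i)!)\,(d(u)+d(v))!$, which is a unit modulo $k$ exactly because $d(u)+d(v)<k$. In both cases the permanent is a closed-form product of small factorials, so no cancellation analysis is needed.
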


Before proving Lemma \ref{key}, we first show that Theorem
\ref{main0} follows from Lemma \ref{key}. Assume there is a matrix
$A$ whose columns are linear combinations of edge columns of $G$
such that $\per(A) \ne 0 \pmod{k}$. By repeatedly using (2), we know that there is a matrix $A'$
whose columns are edge columns of $G$ and $\per(A') \ne 0 \pmod{k}$.
If each edge column occurs at most $k-1$ times in $A'$, then we are
done. If there is an edge column which appears $k'$ times for some
$k' \ge k$, then $\per(A')$ is a multiple of $k'!$, and hence
$\per(A') = 0 \pmod{k}$, contrary to our choice of $A'$. This proves
that Theorem \ref{main0} follows from Lemma \ref{key}.

\noindent {\bf Proof of Lemma \ref{key}} First we consider the case
that $G$ is non-bipartite. Since $G$ is  a   $d$-degenerate graph,
there is an ordering  $v_1, v_2, \ldots, v_n$
 of the vertices such that for each $i$, vertex $v_i$ has $d^-(v_i) \le d$ neighbours
$v_j$ with $j < i$.  Let $A$ be the square matrix which consists of
$d^-(v_i)$ copies of $2A_G(v_i)$. It can be proved easily by induction on $n$ that $|\per(A)|=
2^m\prod_{i=1}^n d^-(v_i)!$. As $G$ is non-bipartite, we know that $d \ge
2$ and hence $k >2$. Also by our hypothesis, $d^-(v_i) \le d < k$ for
each $i$. Hence $\per(A) \ne 0 \pmod{k}$.

It suffices to show that  each column of $A$ is
an integral linear combination of edge columns of $G$.
In other words, for each vertex $v$ of $G$,
  $2 A_G(v)$ can be written as an integral linear combination of edge columns of $G$.

By assumption $G$ is connected and has an odd cycle $(u_0, e_0,u_1,
e_1, \ldots, u_{2q}, e_{2q}, u_0)$. If $v$ is on the cycle, say
$v=u_0$, then $2A_G(u_0)= A_G(e_0)-A_G(e_1)+A_G(e_2)-\ldots +
A_G(e_{2q})$. If $v$ is not on the odd cycle, then let $(w_0, e'_0,
w_1, e'_1, \ldots, e'_{t-1}, w_t)$ be a path connecting $v$ to
$u_0$, say $w_0=v$ and $w_t=u_0$. Then $2A_G(w_0) = 2A_G(e'_0) -
2A_G(e'_1) + 2A_G(e'_2) - \ldots \pm 2A_G(e'_{t-1}) \mp 2A_G(w_t)$,
and then write $2A_G(w_t)$ as an integral linear combination of edge
columns of $G$, we are done. This prove the non-bipartite case of
Lemma \ref{key}.

Assume $G$ is bipartite, and $u, v$ are the two specified vertices, and $d' =d(u)+d(v)$.
Similarly as above,   there is an ordering  $v_1, v_2, \ldots, v_{n-2}$
 of the vertices of $G-\{u, v\}$ such that for each $i$, vertex $v_i$ has $d^-(v_i) \le d$ neighbours
$v_j$ with $j < i$. Let $u=v_{n-1}, v=v_n$.  Let $A$ be the matrix
which consists of $d^-(v_i)$ copies of $A_G(v_i) \pm A_G(v) $ for
$i=1,2,\ldots, n-2$
 and $d'$ copies of $A_G(u) \pm A_G(v)$, where the $\pm$ is determined by the distance between the two involved vertices: if
 the distance is odd, then choose $+$, and otherwise choose $-$.
It is easy to verify that $|\per(A)|= (\prod_{i=1}^{n-2} d^-(v_i)!) d'!$.
Hence $\per(A) \ne 0 \pmod{k}$.

  It suffices to show that  each column   of $A$  can be written as an integral linear combination of edge columns of $G$.
  This is so, because if $x,y$ are two vertices connected by a path of odd
  length  $(u_0, e_0,u_1, e_1, \ldots, u_{2q}, e_{2q}, u_{2q+1})$, say $x=u_0, y=u_{2q+1}$,
  then $A_G(x)+A_G(y)=A_G(e_0)-A_G(e_1)+\ldots +A_G(e_{2q})$. If $x,y$ are two vertices connected by a path of even
  length  $(u_0, e_0,u_1, e_1, \ldots, u_{2q-1}, e_{2q-1}, u_{2q})$, say $x=u_0, y=u_{2q}$,
  then $A_G(x)-A_G(y)=A_G(e_0)-A_G(e_1)+\ldots -A_G(e_{2q-1})$. This completes the proof of Lemma \ref{key}.
  \qed

\begin{corollary}\label{cor1}
If $G$ is  $d$-degenerate, non-bipartite graph, then  $G$ is $(1,
2d-3)$-choosable.
\end{corollary}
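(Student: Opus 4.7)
The plan is to reduce to Theorem~\ref{main} by locating a prime $k$ with $d < k \le 2d-3$ and then invoking the monotonicity of $(1,k)$-choosability in $k$. Specifically, $(1,k)$-choosability implies $(1,k')$-choosability for any $k' \ge k$, since given a $(1,k')$-list assignment one may restrict each edge list to an arbitrary $k$-subset and apply the hypothesis.

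To set up the reduction I would first verify that for every $d \ge 4$ there is a prime $k$ with $d < k \le 2d-3$. For small $d$ this can be checked by hand ($5\in(4,5]$, $7\in(5,7]$, $7\in(6,9]$, $11\in(7,11]$, $11\in(8,13]$, and so on), while for large $d$ it follows from standard tightenings of Bertrand's postulate such as Nagura's theorem (there is a prime in $[n,6n/5]$ for $n\ge 25$); applied with $n=d+1$ this places a prime in $(d,6(d+1)/5]\subseteq(d,2d-3]$ once $d$ is sufficiently large. Once such a prime $k$ is in hand, Theorem~\ref{main} applies to $G$ (which we may assume connected, focusing on the non-bipartite component if necessary), giving $(1,k)$-choosability and hence $(1,2d-3)$-choosability.

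The main obstacle is the number-theoretic step of guaranteeing a prime in the narrow interval $(d,2d-3]$: Bertrand's postulate alone only gives a prime in $(d,2d)$, so one needs a slightly finer prime-gap estimate plus a finite case check to cover all $d \ge 4$. A secondary subtlety, should $G$ be allowed to be disconnected, is handling any bipartite component: one would need to exhibit non-adjacent vertices $u,v$ in that component with $d(u)+d(v)<k$ so that the bipartite clause of Theorem~\ref{main} applies, but typically a bipartite $d$-degenerate component of large enough order contains low-degree vertices that are non-adjacent, and tiny components can be handled directly.
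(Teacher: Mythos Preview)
Your approach is essentially the paper's: find a prime $k$ with $d<k\le 2d-3$ and apply Theorem~\ref{main}. The paper's proof is a single line invoking ``the Bertrand Theorem that for $d>3$, there is a prime $p$ such that $d<p<2d-2$,'' whereas you are more careful, combining Nagura's refinement with a finite check. Your added remarks on monotonicity in $k$ and on disconnected graphs with bipartite components are not in the paper (which tacitly restricts to $d\ge 4$ and connected $G$), but the core idea is identical.
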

\begin{proof}
Using the Bertrand Theorem that for $d > 3$, there is a prime $p$
such that $d < p < 2d-2$.
\end{proof}

\begin{corollary}
If $G \ne K_2$ is a tree, or  a $2$-tree, then $G$ is
$(1,3)$-choosable. If $G$ is a $3$-tree, then $G$ is
$(1,5)$-choosable. If $d \geq 4$ and $G$ is a $d$-tree, then $G$ is
$(1, 2d-3)$-choosable.
\end{corollary}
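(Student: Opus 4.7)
The plan is to dispatch each clause of the corollary by applying either Theorem~\ref{main} or Corollary~\ref{cor1}, verifying in each case that the relevant hypotheses hold. Throughout, I would use the fact that for $d\ge 2$, any $d$-tree is connected, $d$-degenerate by construction, and contains $K_{d+1}$, hence is non-bipartite.

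For the tree case, a tree is $1$-degenerate and therefore also $2$-degenerate, and of course bipartite. To apply Theorem~\ref{main} with $d=2$ and $k=3$ (the smallest prime strictly greater than $2$), I must exhibit two non-adjacent vertices $u,v$ with $d(u)+d(v)<3$. Assuming $G\ne K_2$ and $|V(G)|\ge 3$, any two distinct leaves are non-adjacent (two adjacent leaves would force $G=K_2$) and each has degree $1$, so $d(u)+d(v)=2<3$. The edge-free case $|V(G)|=1$ is trivially $(1,3)$-choosable, so Theorem~\ref{main} delivers the desired conclusion.

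For the $2$-tree case, $G$ is $2$-degenerate, connected, and non-bipartite, so Theorem~\ref{main} with $d=2$, $k=3$ applies directly. For the $3$-tree case, $G$ is $3$-degenerate, connected, and non-bipartite; since the smallest prime strictly greater than $3$ is $5$, Theorem~\ref{main} with $d=3$, $k=5$ yields $(1,5)$-choosability. For $d\ge 4$ and $G$ a $d$-tree, $G$ is $d$-degenerate and non-bipartite, so Corollary~\ref{cor1} gives $(1,2d-3)$-choosability at once.

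The argument is a clean case analysis and nothing hard remains; the only mildly delicate point is the tree case, where I must artificially promote the degeneracy from $1$ to $2$ to meet the $d\ge 2$ hypothesis of Theorem~\ref{main} and then produce two non-adjacent leaves, both of which are immediate once $G\ne K_2$.
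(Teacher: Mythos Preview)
Your proposal is correct and matches the paper's own proof, which simply states that all cases follow easily from Theorem~\ref{main} and Corollary~\ref{cor1}. You have filled in exactly the case analysis the paper leaves implicit, including the small but necessary observation in the tree case that one must take $d=2$ (so that $d\ge 2$) and locate two non-adjacent leaves to satisfy the bipartite hypothesis of Theorem~\ref{main}.
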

\begin{proof}
All these   follow easily from Theorem \ref{main} and Corollary
\ref{cor1}
\end{proof}

The result that trees are
$(1,3)$-choosable was proved in \cite{BGN09}, however, the proof is
different from the one presented here.

\begin{corollary}
\label{main2} Every planar graph with no isolated edges is $(1,7)$-choosable.
\end{corollary}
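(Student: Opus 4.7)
The plan is to apply Theorem~\ref{main} with $d = 5$ and $k = 7$. Since $7$ is prime and $7 > 5 \ge 2$, the numerical hypotheses are satisfied, and every planar graph is $5$-degenerate because by Euler's formula every planar graph (being closed under subgraphs) contains a vertex of degree at most $5$.

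First I would reduce to the connected case: a proper $L$-total weighting of $G$ can be assembled from proper $L$-total weightings of its components, so $(1,7)$-choosability is inherited from the components. Isolated vertices are trivial, and $K_2$ components are excluded by the ``no isolated edges'' hypothesis, so I may assume $G$ is a connected planar graph on $n \ge 3$ vertices. If $G$ is non-bipartite, Theorem~\ref{main} applies immediately.

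The main remaining case, and the principal obstacle, is when $G$ is bipartite. Here I must exhibit two non-adjacent vertices $u, v$ with $d(u) + d(v) < 7$, i.e.\ $d(u) + d(v) \le 6$. For this I would invoke the bipartite planar Euler bound $|E(G)| \le 2n - 4$ (valid for $n \ge 3$), which gives $\sum_w d(w) \le 4n - 8$ and in particular $\delta(G) \le 3$. Pick $v$ with $d(v) = \delta$ and suppose for contradiction every non-neighbor $u$ of $v$ satisfies $d(u) \ge 7 - \delta$. Since each of the $\delta$ neighbors of $v$ has degree at least $\delta$ and each of the $n - 1 - \delta$ non-neighbors has degree at least $7 - \delta$,
$$\delta + \delta^2 + (7-\delta)(n - 1 - \delta) \le 4n - 8.$$
A short inspection of $\delta \in \{1,2,3\}$ shows this is impossible for $n \ge 3$: the inequality is inconsistent for $\delta = 3$ irrespective of $n$, and for $\delta \in \{1,2\}$ it forces $n \le 1$. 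Hence a non-neighbor $u$ of $v$ with $d(u) \le 6 - \delta$ must exist, and Theorem~\ref{main} delivers the required $(1,7)$-choosability.

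The only substantive computation is the degree count producing a low-degree non-adjacent pair in the bipartite case; everything else is a direct appeal to Theorem~\ref{main} together with standard planar and bipartite-planar edge bounds.
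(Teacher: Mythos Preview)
Your proof is correct and follows essentially the same route as the paper: reduce to a connected component, invoke $5$-degeneracy of planar graphs, apply Theorem~\ref{main} directly in the non-bipartite case, and in the bipartite case use the Euler bound $|E|\le 2n-4$ to locate two non-adjacent vertices of combined degree at most~$6$. The only cosmetic difference is that the paper splits the bipartite case by the value of $\delta$ (for $\delta=3$ it counts that there are at least eight degree-$3$ vertices, and for $\delta\le 2$ it simply says the pair is ``easy to see''), whereas you give a single clean degree-sum inequality covering all three values of~$\delta$; your version is in fact the more explicit of the two.
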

\begin{proof}
We may assume $G$ is connected, for otherwise, we consider
components of $G$ separately. It is well-known that every planar
graph is $5$-degenerate. If $G$ is non-bipartite, then we are done
by Theorem \ref{main}. If $G$ is bipartite, then $G$ is triangle
free. By Euler formula $G$ has minimum degree $\delta(G) \le 3$.
If $\delta(G)=3$, then it follows from Euler formula that $G$ has at least $8$ vertices of
degree $3$, and hence there are non-adjacent vertices $u$  and $v$
with $d(u)+d(v) <7$.  In case $\delta(G)=1$ or $2$, it is also easy to see that there are two non-adjacent
vertices $u,v$ with $d(u)+d(v) <7$. So the conclusion again follows
from Theorem \ref{main}.
\end{proof}



\section{Almost $(1,2)$-choosability}

In this section, we prove the following result.

\begin{theorem}
\label{12choosable}
Assume   $d+1$ is a prime number and $G$ is a $d$-degenerate graph.
Let $v_1, v_2, \ldots, v_n$ be an ordering of the vertices of $G$ such that
each vertex $v_i$ has $d^-(v_i) \le d$ backward neighbours. Then there is a $(1,2)$-choosable graph $G'$ obtained from $G$ by adding
at most $d-d^-(v_i)$ leaf neighbours to $v_i$ (i.e., neighbours of degree $1$).
\end{theorem}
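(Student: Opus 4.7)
The plan is to prove Theorem~\ref{12choosable} by adapting the matrix-with-Wilson's-theorem technique of Lemma~\ref{key}. For $(1,2)$-choosability the only valid index function $\eta'$ with $\eta'(v)\le 0$ and $\eta'(e)\le 1$ is the function $\eta^*$ given by $\eta^*(v)=0$ and $\eta^*(e)=1$, so it suffices to show $c_{\eta^*}=\per(A_{G'}(\eta^*))\neq 0$, and I would actually try to prove the stronger statement $c_{\eta^*}\not\equiv 0\pmod{d+1}$ so that the divisibility arguments of Lemma~\ref{key} are available.

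First I would form $G'$ by attaching exactly $d-d^-(v_i)$ leaves at each $v_i$ (the maximum allowed), extend the ordering by placing all leaves after $v_n$, and orient every edge from earlier to later. Let $E_i$ be the back-edges of $v_i$ together with its leaf-edges, so $|E_i|=d$ and $\{E_i\}$ partitions $E(G')$. Consider the auxiliary $|E(G')|\times|E(G')|$ matrix $A^*$ whose columns are, for each $i$, $d$ copies of the vertex column $A_G(v_i)$. The standard multilinear identity expresses $\per(A^*)=(d!)^n \sum_\tau \prod_f A_G[f,\tau(f)]$, where $\tau$ ranges over maps $E(G')\to\{v_1,\dots,v_n\}$ with $\tau(f)\in f$ and $|\tau^{-1}(v_i)|=d$. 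The leaves at $v_i$ are forced to $\tau=v_i$, leaving exactly $d^-(v_i)$ non-leaf slots, and a downward induction starting from $v_n$ (which has no forward edges) forces every non-leaf edge to the canonical head-orientation. Hence $\per(A^*)=(d!)^n(-1)^{|E(G)|}$, and by Wilson's theorem $(d!)^n\equiv(-1)^n\pmod{d+1}$, so $\per(A^*)\not\equiv 0\pmod{d+1}$.

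To descend from $\per(A^*)$ to $c_{\eta^*}$, I rewrite each column of $A^*$ via the identity $A_G(v_i)=A_G(e)-A_G(x)$, where $e=v_ix\in E_i$ is chosen bijectively across the $d$ copies at $v_i$. Applying (2) gives $\per(A^*)=c_{\eta^*}+\sum_{S\neq\emptyset}(-1)^{|S|}\per(M_S)$, where $S$ indicates which columns pick the vertex branch $-A_G(x)$. The goal is $\sum_{S\neq\emptyset}(-1)^{|S|}\per(M_S)\equiv 0\pmod{d+1}$. For each column in $S$ corresponding to a leaf-edge $v_iw$, the column $-A_G(w)$ has a unique nonzero entry in row $v_iw$ and can be peeled off with its row. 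For each column in $S$ corresponding to a back-edge $v_jv_i$, the column $-A_G(v_j)$ is itself a vertex column that I would expand again via some $e'\in E_j$, iterating the substitution downward until the chain terminates at leaves of $v_1$ (whose $E_1$ consists entirely of leaf-edges). Then the factorial-divisibility argument from the end of Lemma~\ref{key} kills any term in which a column appears with multiplicity $\ge d+1$, and the remaining low-multiplicity terms should cancel in signed pairs.

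The hardest step is exactly this cancellation. A preliminary check on $G=K_3$ with $d=2$ and $d+1=3$ gives $\per(A^*)=-8\equiv 1$ and $c_{\eta^*}=4\equiv 1\pmod 3$, confirming the claim there; however the attempt breaks down for small cases such as $G=P_3$ with $d=1$, where naively adding the maximum number of leaves gives $G'=P_4$ with $c_{\eta^*}=0$, whereas $G'=P_3$ (no added leaves) satisfies $c_{\eta^*}=-1\neq 0$. Thus the freedom "at most $d-d^-(v_i)$" in the theorem statement is essential: the proof must either choose the $\ell_i$ carefully in advance or build $G'$ inductively, attaching leaves one at a time while preserving $c_{\eta^*}\not\equiv 0\pmod{d+1}$ and using the flexibility of how many to add at each vertex. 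Making this case analysis precise, and controlling how many times each leaf column is used in the iterated back-chain expansion above, is what I expect to be the technical heart of the proof.
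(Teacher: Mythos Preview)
Your computation of $\per(A^*)\equiv \pm(d!)^n\not\equiv 0\pmod{d+1}$ is correct and is exactly the content of Theorem~\ref{mixed} in the paper (the paper phrases it as $|\per(M_n)|=(d!)^n$ after successively replacing each edge column $A_{G'}(v_iv_j)$ by $A_{G'}(v_i)$, the extra $A_{G'}(v_j)$-terms dying modulo $d+1$ because $v_j$ already has $d$ copies). You have also correctly diagnosed, via the $P_3\to P_4$ example, that one cannot hope for $\per(B_{G'})\equiv\per(A^*)\pmod{d+1}$ with the maximal number of leaves, so the ``at most'' is essential.

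The genuine gap is the bridge from $\per(A^*)\ne 0$ to finding \emph{which} leaves to drop. Your proposed mechanism---iterating the substitution $A_G(v_i)=A_G(e)-A_G(x)$ down back-chains and hoping for signed cancellation among the residual terms---is not what the paper does and is unlikely to close: there is no reason the low-multiplicity terms should pair off. The paper instead introduces an intermediate index function $\eta$ on the maximal-leaf graph $G'$ with $\eta(v_i)=d-d^-(v_i)$, $\eta(e)=1$ for $e\in E(G)$, and $\eta(z)=0$ for every added leaf and leaf-edge; Theorem~\ref{mixed} gives $\per(A_{G'}(\eta))\not\equiv 0\pmod{d+1}$. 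The second step (Lemma~\ref{subsub}) is a clean minimal-counterexample argument on the number of leaves, not a cancellation: expand $\per(B_{G'})$ by writing each \emph{leaf}-edge column as $A_{G'}(v_i)+A_{G'}(w)$. One of the resulting summands is exactly $\per(A_{G'}(\eta))$; if every other summand vanishes, then $\per(B_{G'})\ne 0$ and $G'$ itself is $(1,2)$-choosable. Otherwise some other summand $\per(A_{G'}(\eta'))\ne 0$, and the leaf columns $A_{G'}(w)$ appearing in $\eta'$ each have a single nonzero entry, so peeling them off with their rows reduces to the \emph{same} hypothesis on $G'-Z$ with strictly fewer leaves; induction finishes. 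No mod-$(d+1)$ arithmetic and no signed pairing are needed at this stage---the prime is used only to get $\per(A_{G'}(\eta))\ne 0$ in the first place.
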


Prior to this paper, all the known $(1,2)$-choosable graphs are bipartite graphs.
As a consequence of this lemma,  every graph $G$
is a subgraph of a $(1,2)$-choosable graph $G'$.

Before proving Theorem \ref{12choosable}, we shall first prove that
if $G$ is $d$-degenerate and each vertex of $G$ has backdegree
``almost" $d$, then $G$ is ``almost" $(1,2)$-choosable.

\begin{lemma}
\label{subgraph} Assume $G$ is a graph and $\eta$ is a non-singular
index function of $G$, and $E'$ is a subset of edges of $G$.
If $\eta(e)=0$ for every $e \in E'$, then $\eta$ is a non-singular index function of $G-E'$.
\end{lemma}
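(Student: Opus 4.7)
The plan is to use the permanent characterization of non-singular index functions together with a Laplace-type expansion of the permanent along the rows indexed by $E'$. Since $\eta$ is non-singular in $G$, there is a valid index function $\eta' \le \eta$ with $\per(A_G(\eta')) \ne 0$. The hypothesis $\eta(e) = 0$ for every $e \in E'$ forces $\eta'(e) = 0$ for $e \in E'$, so none of the columns of $A_G(\eta')$ is indexed by an edge of $E'$; every column is indexed either by a vertex of $G$ or by an edge in $E(G) \setminus E'$. However, the rows of $A_G(\eta')$ are still indexed by all of $E(G)$, including $E'$, so $\eta'$ itself is not yet a valid index function for $G - E'$.

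Next, I would expand the permanent by grouping permutations according to where they send the rows indexed by $E'$. This gives a sum of the form $\per(A_G(\eta')) = \sum_{(S,\tau)} \bigl(\prod_{e \in E'} A_G[e,\tau(e)]\bigr) \cdot \per(\tilde{A}_{S,\tau})$, where $S$ ranges over size-$|E'|$ choices of (copies of) columns of $A_G(\eta')$, $\tau : E' \to S$ is a bijection, and $\tilde{A}_{S,\tau}$ is the submatrix obtained by deleting the rows indexed by $E'$ and the columns in $S$. Since the left-hand side is nonzero, at least one summand on the right is nonzero; I fix such a pair $(S,\tau)$.

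Now I would define a new index function $\eta''$ of $G-E'$ by setting $\eta''(z) = \eta'(z) - |\{ e \in E' : \tau(e) \text{ is a column indexed by } z\}|$ for each $z \in V(G) \cup (E(G) \setminus E')$. Then $\eta'' \le \eta' \le \eta$, and since $|S|=|E'|$, we have $\sum_z \eta''(z) = |E(G)| - |E'| = |E(G-E')|$, so $\eta''$ is a valid index function of $G-E'$. The key technical step, which I expect is the only real obstacle, is to identify $\tilde{A}_{S,\tau}$ with $A_{G-E'}(\eta'')$. This comes down to checking that for every $e \in E(G) \setminus E'$ and every $z \in V(G) \cup (E(G) \setminus E')$, one has $A_G[e,z] = A_{G-E'}[e,z]$: this is immediate from the definition of the matrix entries, because the property of $z$ being an endpoint of $e$, or an edge distinct from $e$ incident to an endpoint of $e$, is preserved on passing from $G$ to $G - E'$ whenever $z \notin E'$. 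With this identification, $\per(A_{G-E'}(\eta'')) = \per(\tilde{A}_{S,\tau}) \ne 0$, so $c_{\eta''}^{G-E'} \ne 0$, and hence $\eta$, viewed as an index function of $G - E'$, is non-singular.
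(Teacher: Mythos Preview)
Your proof is correct and follows essentially the same approach as the paper's: both use a Laplace-type expansion of the permanent along the rows indexed by $E'$ to extract, from the valid $\eta' \le \eta$ with $\per(A_G(\eta')) \ne 0$, a square submatrix on the remaining rows with nonzero permanent, and then identify that submatrix with $A_{G-E'}(\eta'')$ for some valid $\eta'' \le \eta$. The paper's version is much terser---it simply asserts that after deleting the $E'$-rows one can delete suitable columns to get a square matrix with nonzero permanent---whereas you spell out the expansion and the entry-by-entry identification $A_G[e,z]=A_{G-E'}[e,z]$ explicitly.
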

\begin{proof}
Let $G'=G-E'$. As $\eta(e)=0$ for every $e \in E'$, $A_{G'}(\eta)$  is the matrix obtained from  $A_G(\eta)$ by deleting the rows indexed
by edges $e \in E'$.  Since $\per(A_G(\eta)) \ne 0$, one can delete some columns from $A_{G'}(\eta)$ to obtain
a square matrix with nonzero permanent. I.e.,
there is a valid index function $\eta'$ of $G'$  such that   $\eta' \le \eta$,
and $\per(A_{G'}(\eta')) \ne 0$. Thus   $\eta$ is a non-singular index function of $G'$.
\end{proof}

\begin{theorem}
\label{mixed}
Assume $d+1$ is a prime number, $G$ is a $d$-degenerate graph, and  $v_1, v_2, \ldots, v_n$ is an ordering
 of the vertices such that for each $i$, vertex $v_i$ has $d^-(v_i) \le d$ backward neighbours.
 Let $G'$ be obtained from $G$ by adding $d-d^-(v_i)$ leaf neighbours to $v_i$ for $i=1,2,\ldots, n$.
 Let $\eta$ be the index function of $G'$ defined as $\eta(v_i)=d-d^-(v_i)$ and $\eta(e)=1$ for each edge $e$ of $G$,
 and $\eta(z)=0$ for each added vertex and edge.
Then $\per(A_{G'}(\eta)) \ne 0 \pmod{d+1}$.
\end{theorem}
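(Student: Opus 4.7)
The plan is to expand $\per(A_{G'}(\eta))$ by applying multilinearity of the permanent to the edge columns, and then reduce modulo the prime $d+1$ so that only one term survives, finishing via Wilson's theorem. Applying (1) to each of the $|E(G)|$ original-edge columns of $A_{G'}(\eta)$ rewrites
$$\per(A_{G'}(\eta)) = \sum_f \per(M_f),$$
where $f: E(G) \to V(G)$ ranges over all choice functions with $f(e) \in e$, and $M_f$ is the all-vertex-column matrix in which $A_{G'}(v_i)$ appears $m_f(v_i) := (d - d^-(v_i)) + |f^{-1}(v_i)|$ times. Note that $\sum_i m_f(v_i) = dn = |E(G')|$, so $M_f$ is square.

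For each $f$ I would next factor out the leaf-edge rows. The row indexed by a leaf edge attached to $v_i$ has its nonzero entries only in the $m_f(v_i)$ copies of the $v_i$-column; by orienting every leaf edge so its non-leaf endpoint is the head, these entries are all $+1$. Matching the $d - d^-(v_i)$ leaf rows at $v_i$ to distinct $v_i$-columns contributes a factor of $m_f(v_i)!/|f^{-1}(v_i)|!$, leaving the permanent $\per(N_f)$ of an $|E(G)| \times |E(G)|$ matrix whose columns are $|f^{-1}(v_i)|$ copies of $A_G(v_i)$. Grouping the bijections defining $\per(N_f)$ by the induced orientation $g$ of $G$ gives $\per(N_f) = \prod_i |f^{-1}(v_i)|! \cdot S(b(f))$, where $S(\mathbf{h}) := \sum_{g: b(g) = \mathbf{h}} \mathrm{sign}(g)$ is a signed count of orientations of $G$ with in-degree sequence $\mathbf{h}$. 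After the $|f^{-1}(v_i)|!$ factors cancel and we reindex the outer sum by $\mathbf{h} = b(f)$,
$$\per(A_{G'}(\eta)) = \sum_{\mathbf{h}} N(\mathbf{h})\, S(\mathbf{h}) \prod_{i} \bigl(d - d^-(v_i) + h_i\bigr)!,$$
where $N(\mathbf{h})$ counts orientations of $G$ with in-degrees $\mathbf{h}$.

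Now I reduce modulo $d+1$. If any $h_i \ge d^-(v_i) + 1$, then $(d - d^-(v_i)) + h_i \ge d + 1$, so the corresponding factorial vanishes modulo the prime $d+1$ and kills the whole term. Since $\sum_i h_i = |E(G)| = \sum_i d^-(v_i)$, the only sequence surviving is $\mathbf{h}^* = (d^-(v_1), \dots, d^-(v_n))$. A short induction peeling off $v_n$ (which must have all incident edges incoming because $d^-(v_n) = d(v_n)$) shows that the natural orientation is the unique orientation realizing $\mathbf{h}^*$; choosing the reference orientation $D$ to be the natural one gives $N(\mathbf{h}^*) = S(\mathbf{h}^*) = 1$. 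The surviving term is therefore $\prod_i d! = (d!)^n$, and Wilson's theorem yields $(d!)^n \equiv (-1)^n \not\equiv 0 \pmod{d+1}$, as required.

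The main obstacle is the sign and multiplicity bookkeeping that converts $\per(M_f)$ cleanly into $\prod_i m_f(v_i)! \cdot S(b(f))$; once that factorization is established, the modular reduction is almost automatic. Both uses of the primality of $d+1$ --- that $k! \not\equiv 0 \pmod{d+1}$ precisely for $k \le d$, and Wilson's identity for $k = d$ --- are essential.
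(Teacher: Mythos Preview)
Your proof is correct, but it takes a different route from the paper's. You expand \emph{all} edge columns simultaneously via (1), obtain an explicit formula
\[
\per(A_{G'}(\eta)) = \sum_{\mathbf h} N(\mathbf h)\,S(\mathbf h)\prod_i\bigl(d-d^-(v_i)+h_i\bigr)!,
\]
and then kill every term except $\mathbf h^*=(d^-(v_1),\dots,d^-(v_n))$ modulo $d+1$. The paper instead works iteratively: processing vertices $v_1,\dots,v_n$ in order, at step $i$ it replaces each column $A_{G'}(v_iv_j)$ with $j<i$ by $A_{G'}(v_i)$, observing that the discarded $A_{G'}(v_j)$-branch already contains $d+1$ identical columns and so vanishes modulo $d+1$. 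After $n$ steps one reaches the all-vertex matrix with $d$ copies of each $A_{G'}(v_i)$, whose permanent is $\pm(d!)^n$ directly.

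The two arguments use primality in the same place (a $(d{+}1)$-fold repeated column forces divisibility of the permanent by $(d{+}1)!$), and both terminate at $(d!)^n$. The paper's stepwise replacement sidesteps exactly the bookkeeping you flag as the ``main obstacle'': it never needs the factorization $\per(M_f)=\prod_i m_f(v_i)!\cdot S(b(f))$, the signed orientation count $S(\mathbf h)$, or the uniqueness of the orientation realizing $\mathbf h^*$. Your approach, on the other hand, yields an exact integer identity for $\per(A_{G'}(\eta))$ before any reduction, which is more informative but costs more to set up.
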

\begin{proof}
Let $M_0=A_{G'}(\eta)$.
For $i=1,2,\ldots, n$, let $M_i$ be obtained from $M_{i-1}$ as
follows: For each edge $e = v_iv_j \in E(G)$ with $j <i$, replace
the edge column $A_{G'}(e)$ with $A_{G'}(v_i)$.

\begin{claim}\label{cl1}
For any $j \le i$, $M_i$ contains exactly $d$ copies of the column
$A_{G'}(v_j)$ and $\per(M_i) = \per(M_{i-1}) \pmod{d+1}$.
\end{claim}

First we prove that $M_i$ contains exactly $d$ copies of the column
$A_{G'}(v_j)$ for any $j \le i$. This is certainly true for $i=1$, because $M_1=M_0$ and $\eta(v_1)=d$. Assume this is true for $M_{i-1}$.
By the rule above, $d^-(v_i)$ copies of $A_{G'}(v_i)$ are used to replace $d^-(v_i)$ edge columns. Since $M_{i-1}$
has $\eta(v_i)=d-d^-(v_i)$ copies of $A_{G'}(v_i)$,  we conclude that $M_i$ contains exactly $d$ copies of
$A_{G'}(v_i)$ as its column vectors.

Now we   prove   $\per(M_i) = \per(M_{i-1}) \pmod{d+1}$. For each
edge $e=v_iv_j$   with $j <i$, we write the column   $A_{G'}(e)$ in
$M_{i-1}$ as $A_{G'}(v_i)+A_{G'}(v_j)$. Apply (2) to expand
$\per(M_{i-1})$ as the   sum of a family of  permanents. Then
$\per(M_i)$ is one of the permanents. For each of the other
permanents $M'$, there is an index $j < i$ such that $M'$ contains
at least one more column of $A_{G'}(v_j)$ than $M_{i-1}$, and hence
contains at least $d+1$ copies of the column $A_{G'}(v_j)$.
Therefore $\per(M') = 0 \pmod{d+1}$. Therefore $\per(M_i) =
\per(M_{i-1}) \pmod{d+1}$. This completes the proof of the claim.

Let $\eta'$ be the index function defined as $\eta'(v)=d$ for   $v \in V(G)$, $\eta'(v)=0$ for   $v \in V(G') \setminus V(G)$
 and $\eta'(e)=0$
for each edge $e$ of $G'$. By Claim \ref{cl1},   $M_n=A_{G'}(\eta')$. As each vertex $v \in V(G)$ has back degree exactly $d$,
we conclude that   $|\per(M_n)| = (d!)^n \ne 0 \pmod{d+1}$. Therefore $\per(A_{G'}(\eta)) \ne 0 \pmod{d+1}$. So $\eta$ is a non-singular index function
of $G'$. By Lemma \ref{subgraph}, $\eta$ is a non-singular index function of $G$.
\end{proof}

If $d+1$ is prime, $G$ is $d$-degenerate and almost every   vertex
has back degree exactly $d$, then $G$ is ``almost" $(1,
2)$-choosable. For example, we have the following corollary.

\begin{corollary}
\label{ktree}
If $d+1$ is a prime number and $G$ is a $d$-tree, then $G$ is almost $(1,2)$-choosable, except that the first $d$ vertices require lists of sizes
$d+1, d, \ldots, 2$, respectively. In particular, if $G$ is a tree, and $v$ is an arbitrary vertex of $G$, then $G$ is   $(1,2)$-choosable, except that $v$ needs a list of size $2$.
If $G$ is $2$-degenerate, and every vertex except the first $2$
vertices have back-degree exactly $2$, then $G$ is almost $(1,2)$-choosable,
except that for the first two vertices $v_1, v_2$ need a list of size $3$.
\end{corollary}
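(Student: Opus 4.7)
The plan is to derive each part of the corollary directly from Theorem~\ref{mixed} by choosing an ordering that concentrates the back-degree deficit at the first few vertices, and then invoking Lemma~\ref{subgraph} to trade the added leaves for larger lists on the vertices themselves.

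For the $d$-tree case, I would use the natural simplicial ordering $v_1, v_2, \ldots, v_n$ of $G$ in which $v_1, \ldots, v_d$ form an initial $d$-clique (so that $d^-(v_i) = i-1$ for $i \le d$) and every subsequent vertex $v_i$ (for $i > d$) is attached to a $d$-clique, giving $d^-(v_i) = d$. Applying Theorem~\ref{mixed} to this ordering produces an augmented graph $G'$, obtained by adding $d - d^-(v_i)$ leaves to each $v_i$, together with a non-singular index function $\eta$ satisfying $\eta(v_i) = d + 1 - i$ for $i \le d$, $\eta(v_i) = 0$ for $i > d$, $\eta(e) = 1$ for every $e \in E(G)$, and $\eta(z) = 0$ on every added leaf and every added leaf-edge.

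Since $\eta$ vanishes on the set $E'$ of added edges, Lemma~\ref{subgraph} tells us that $\eta$ remains a non-singular index function of $G' - E'$. The added leaves are now isolated, so their variables never appear in $P_{G'-E'}$, and the restriction of $\eta$ to $V(G) \cup E(G)$ is a non-singular index function of $G$ itself. Combinatorial Nullstellensatz then yields the almost $(1,2)$-choosability statement: lists of sizes $d+1, d, \ldots, 2$ at $v_1, \ldots, v_d$, size $1$ at every other vertex, and size $2$ at every edge suffice to find a proper total weighting.

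The tree case is simply $d = 1$ (with $d + 1 = 2$ prime); since a tree admits a BFS ordering rooted at any chosen vertex $v$, we may take $v_1 = v$, and the above argument gives a proper total weighting whenever $|L(v)| = 2$ and every other vertex has a list of size $1$. The $2$-degenerate case is the $d = 2$ specialization ($d + 1 = 3$ prime); the hypothesis on the ordering ensures $d^-(v_i) = 2$ for all $i \ge 3$, so Theorem~\ref{mixed} gives $\eta(v_1) + 1 = 3$ and $\eta(v_2) + 1 \le 3$, which the corollary records uniformly as ``size $3$.'' The main obstacle is purely conceptual rather than computational: one must notice that the leaf-augmentation used in Theorem~\ref{mixed} can always be traded for additional freedom at the parent vertex by deleting the leaf-edges via Lemma~\ref{subgraph} and reinterpreting the resulting index function through Combinatorial Nullstellensatz.
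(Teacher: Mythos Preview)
Your proposal is correct and follows exactly the route the paper intends: the corollary is stated in the paper without proof, immediately after Theorem~\ref{mixed}, as a direct consequence of that theorem together with Lemma~\ref{subgraph}. Your choice of the simplicial ordering for the $d$-tree, the computation $\eta(v_i)=d+1-i$, and the specializations to $d=1$ and $d=2$ are precisely what is needed; the one remark worth adding is that the final sentence of the proof of Theorem~\ref{mixed} already performs the Lemma~\ref{subgraph} step (``$\eta$ is a non-singular index function of $G$''), so you could cite that directly rather than re-deriving it.
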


Now we are ready to prove Theorem \ref{12choosable}.
For a graph $G$, let $B_G =A_G(\eta)$, where $\eta(e)=1$ for each edge $e$, and $\eta(v)=0$ for each vertex $v$.

\begin{lemma}
 \label{subsub}
 Assume $\eta$ is an index function of a graph $G$ and $X$ is a set of leaves of $G$ for which the following hold:
 \begin{enumerate}
 \item For each edge $e$, $\eta(e) =0$ if $e$ is incident to a vertex in $X$ and  $\eta(e)=1$ otherwise.
 \item For each vertex $v$, $\eta(v) = |N_G(v) \cap X|$.
 \end{enumerate}
 If $\per(A_G(\eta)) \ne 0$, then there is a subset $Y$ of $X$ such that $G-Y$ is $(1,2)$-choosable.
 \end{lemma}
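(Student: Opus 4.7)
The plan is to rewrite each vertex-column appearing in $A_G(\eta)$ as a difference of an edge-column and a leaf-vertex column, expand the permanent by multi-linearity, and identify the resulting summands with matrices $A_{G-T}$ for candidate subsets $T \subseteq X$.

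First I would verify that $\eta$ is valid: the non-leaf edges contribute $|E(G)|-|X|$ to $\sum\eta$, and the vertex-counts contribute $\sum_v |N_G(v)\cap X| = |X|$ (each leaf in $X$ has a unique neighbour), so $A_G(\eta)$ is a square matrix of order $|E(G)|$. For each vertex $v\notin X$ with $\eta(v)=k_v\ge 1$, enumerate the leaf neighbours $w_1^v,\ldots,w_{k_v}^v$ of $v$ in $X$ and, using equation (1) in the form $A_G(v)=A_G(vw_i^v)-A_G(w_i^v)$, replace the $i$-th copy of the column $A_G(v)$ in $A_G(\eta)$ by this difference. Applying multi-linearity of the permanent (equation (2)) column by column, each $w\in X$ then contributes a binary choice: either keep the edge-column $A_G(vw)$ (sign $+$) or keep the vertex-column $A_G(w)$ (sign $-$). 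Letting $T\subseteq X$ record those $w$ for which the vertex-column was chosen, this produces
$$\per(A_G(\eta)) = \sum_{T\subseteq X}(-1)^{|T|}\per(N_T),$$
where $N_T$ has columns $A_G(e)$ for each edge $e$ with no endpoint in $X$, $A_G(vw)$ for each $w\in X\setminus T$, and $A_G(w)$ for each $w\in T$.

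Next I would exploit the fact that for $w\in T$ with unique neighbour $v$, the column $A_G(w)$ has a single nonzero entry, located at the row indexed by the leaf-edge $e_w = vw$. Every permutation contributing to $\per(N_T)$ must therefore pair row $e_w$ with column $A_G(w)$, so $\per(N_T) = \pm\per(N_T')$, where $N_T'$ is obtained from $N_T$ by deleting all such rows and columns. The rows of $N_T'$ are indexed precisely by $E(G-T)$, and its columns are $A_G(e)$ for $e\in E(G-T)$; since for $f,e\in E(G-T)$ the entry $A_G[f,e]$ depends only on the incidence of $f$ and $e$, which is identical in $G$ and in $G-T$, the surviving columns are exactly $A_{G-T}(e)$. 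Hence $N_T' = A_{G-T}(\eta_T')$ with $\eta_T'(e)=1$ for every edge of $G-T$ and $\eta_T'(v)=0$ for every vertex.

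Combining the two reductions yields $\per(A_G(\eta)) = \sum_{T\subseteq X}\pm\per(A_{G-T}(\eta_T'))$. Since the left side is nonzero by hypothesis, some $Y = T$ satisfies $\per(A_{G-Y}(\eta_Y'))\ne 0$. Then $\eta_Y'$ is a valid non-singular index function of $G-Y$ with vertex-values $0$ and edge-values $1$, so by the Combinatorial Nullstellensatz, $G-Y$ is $(1,2)$-choosable. The main care needed is in step two: matching the deleted rows and columns cleanly so that $N_T'$ really coincides with $A_{G-T}$ under the all-ones edge index function, rather than a distorted variant. Once this identification is in place the rest is a standard multi-linear expansion and pigeonhole on the nonzero sum.
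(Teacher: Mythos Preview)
Your proof is correct and, in fact, more direct than the paper's. The paper argues by minimal counterexample: it starts from $B_G$ (the matrix with one copy of every edge column), expands each leaf-edge column $A_G(e_{v,j})$ as $A_G(v)+A_G(v'_j)$, and obtains $\per(B_G)=\sum_{\eta'\in\Gamma}\per(A_G(\eta'))$ with $\eta\in\Gamma$; it then shows, via induction on $|G|$, that every $\eta'\neq\eta$ either has $\per(A_G(\eta'))=0$ or yields a smaller instance of the lemma, forcing $\per(B_G)=\per(A_G(\eta))\neq 0$ and hence $Y=\emptyset$ works. You instead expand in the opposite direction: starting from $A_G(\eta)$, you rewrite each vertex column $A_G(v)$ as $A_G(vw)-A_G(w)$ and obtain a closed-form identity $\per(A_G(\eta))=\sum_{T\subseteq X}(-1)^{|T|}\per(N_T)$, then identify $N_T$ (after stripping the rank-one leaf-vertex columns) directly with $A_{G-T}$ under the all-ones edge index. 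This bypasses the induction entirely and makes the candidate $Y$ explicit as the $T$ for which the summand is nonzero. Your one implicit assumption---that every $w\in X$ has its unique neighbour outside $X$---is harmless: two adjacent leaves in $X$ would make $\eta$ non-valid (the column count would exceed the row count), contradicting $\per(A_G(\eta))\neq 0$. The paper's approach has the mild advantage of showing that in a minimal instance one may always take $Y=\emptyset$, but your argument is cleaner and self-contained.
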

\begin{proof}
Assume the lemma is not true and $G$ is a minimum counterexample.

For each vertex $v$ of $G$, let   $N_G(v) \cap X = \{v'_j: 1 \le j \le \eta(v)\}$   and let $e_{v,j} = vv'_j$.
Take the matrix $B_G$, and for each edge $e_{v,j}$,   write $A_G(e_{v,j})$ as the sum $A_{G}(v)+A_G(v'_j)$.
By repeatedly using (2), $\per(B_G)$ can be written as the summation of the permanents of many matrices.
To be precise,
$\per(B_G) = \sum_{\eta' \in \Gamma} A_G(\eta')$, where $ \Gamma$  consists of all the index functions $\eta'$ such that
\begin{enumerate}
 \item $\eta'(e) = \eta(e)$ for  each edge $e$.
 \item For $v'_j \in X$, $\eta'(v'_j) = 0$ or $1$.
 \item For each vertex $v$,
 $\eta'(v) = \eta(v) - | \{v'_j: \eta'(v'_j) = 1\}|$.
 \end{enumerate}

Observe that $\eta \in \Gamma$.

\begin{claim}
\label{clmn}
If $\eta' \in \Gamma$ and $\eta' \ne \eta$, then $\per(A_G(\eta')) = 0$.
\end{claim}
\begin{proof}
Assume to the contrary that there exists
 $\eta' \in \Gamma$,  $\eta' \ne \eta$, $\per(A_{G}(\eta')) \ne 0$.

Let $Z =\{v'_j \in X: \eta'(v'_j) =1\}$. As $\eta' \ne \eta$, $Z \ne
\emptyset$. The column $A_G(v'_j)$ has only one entry equals $1$,
namely the entry at the row indexed by $e'_{v,j}$, and all the other
entries are $0$. Therefore, $\per(A_{G-Z}(\eta'))=\per(A_G(\eta'))$,
where in $\per(A_{G-Z}(\eta'))$,   $\eta'$ denotes its restriction
to $G-Z$. As $\per(A_{G-Z}(\eta')) \ne 0$,
  $G'=G-Z$ together with $\eta'$ and $X'=X-Z$ satisfy the condition of Lemma \ref{subsub}.
By the minimality of $G$, there is a subset $Y'$ of $X'$, such that $G'-Y'$ is $(1,2)$-choosable.
Let $Y=Y' \cup Z$, we have $G-Y$ is $(1,2)$-choosable, a contradiction.
This completes the proof of Claim \ref{clmn}.
\end{proof}

Now Claim \ref{clmn} implies that $\per(B_G)=\per(A_G(\eta)) \ne 0$, and hence $G$ itself is $(1,2)$-choosable,
a contradiction.
 \end{proof}

Theorem \ref{12choosable} follows    from Theorem \ref{mixed} and Lemma \ref{subsub}.

\begin{corollary}
\label{ktree}
If $d+1$ is a prime number, $G$ is $d$-tree, and $K$ is a $d$-clique in $G$, then there is a $(1,2)$-choosable graph which is
obtained from $G$ by adding $k_1,k_2,\ldots, k_d$ leaf neighbours to the $d$ vertices of $K$ respectively, for some $k_j \le j$.
\end{corollary}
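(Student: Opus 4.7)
The plan is to invoke Theorem \ref{12choosable} with an ordering $v_1, v_2, \ldots, v_n$ of $V(G)$ in which the vertices of $K$, labelled $u_1,\ldots,u_d$ as in the statement, occupy positions $d, d-1, \ldots, 1$; explicitly $v_{d+1-j} = u_j$. Since $K$ is a clique, the back-degree of $u_j=v_{d+1-j}$ in this ordering is exactly $d-j$, so Theorem \ref{12choosable} attaches at most $d-(d-j)=j$ leaf neighbours to $u_j$, which is exactly the bound $k_j \le j$ demanded by the corollary.

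To build such an ordering I would first choose a $(d+1)$-clique $K^+\supseteq K$; this exists because in a $d$-tree the maximal cliques are precisely the $(d+1)$-cliques, so $K$ must lie inside some maximal clique. Writing $K^+ = K\cup\{w\}$, I would place $w$ at position $d+1$, so $w$ has back-degree $d$. I would then extend the initial segment $v_1,\ldots,v_{d+1}=K^+$ to a full ordering by repeatedly appending a simplicial vertex (a vertex of degree $d$ whose neighbourhood is a $d$-clique) of the currently remaining subgraph that lies outside $K^+$; each such appended vertex has back-degree exactly $d$, so Theorem \ref{12choosable} contributes no leaves at it.

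The main obstacle is to guarantee that, at every stage until every vertex is placed, the remaining $d$-tree still has a simplicial vertex outside $K^+$. I would argue this via the clique-tree structure of $d$-trees: the maximal $(d+1)$-cliques of a $d$-tree form a tree under the relation ``share a common $d$-clique,'' and every leaf of this clique tree contains a vertex of $G$ that lies in no other maximal clique and is therefore simplicial in $G$. Whenever the remaining graph has more than $d+1$ vertices, its clique tree has at least two leaves, so at least one leaf differs from $K^+$ and yields a simplicial vertex outside $K^+$; removing that vertex preserves both the $d$-tree property and the clique $K^+$, so the construction goes through by induction. Applying Theorem \ref{12choosable} to the resulting ordering then produces the required $(1,2)$-choosable graph $G'$ with at most $j$ leaves attached to $u_j$ for $j=1,\ldots,d$ and no leaves at any other vertex, proving the corollary.
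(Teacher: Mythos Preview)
Your proposal is correct and takes essentially the same approach as the paper: order $V(G)$ so that the vertices of $K$ occupy the first $d$ positions and every later vertex has back-degree exactly $d$, then invoke Theorem~\ref{12choosable}. The paper's proof simply asserts that such an ordering exists, while you supply the standard simplicial-peeling/clique-tree justification; just note that the ``appending'' language in your second paragraph is slightly at odds with the (correct) reverse peeling you actually carry out in your third paragraph.
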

\begin{proof}
The vertices of $G$ can be ordered as $v_1, v_2, \ldots, v_n$ so that $K=\{v_1,v_2,\ldots, v_d\}$ and
$v_j$ has $j-1$ backward neighbours for $j \le d$, and each other vertex has $d$ backward neighbours. The conclusion then follows from
Theorem \ref{12choosable}.
\end{proof}

 The $d=1$ case of Corollary \ref{ktree} was proved in \cite{cwz}, where it is shown
 that trees with an even number of edges are
$(1,2)$-choosable.

\section {$(k, 2)$-choosability }

By applying  Theorem \ref{mixed}, we prove in this section that when $d+1$ is a prime, then
$d$-degenerate graphs are $(d,2)$-choosable. In particular, $2$-degenerate graphs are $(2,2)$-choosable.

\begin{theorem}
\label{ddeg}  Assume $d+1$ is a prime number, $G$ is a
$d$-degenerate graph. Then $G$ is $(d,2)$-choosable.
\end{theorem}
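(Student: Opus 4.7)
The plan is to apply Theorem~\ref{mixed} to produce a non-singular index function $\eta^*$ of $G$ with $\eta^*(v)\le d-1$ and $\eta^*(e)\le 1$ for all $v,e$, which by the Combinatorial Nullstellensatz yields $(d,2)$-choosability. Fix a $d$-degeneracy ordering $v_1,\dots,v_n$ of $G$ with back-degrees $b_i=d^-(v_i)\le d$, and let $S=\{i:b_i=d\}$.

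The key intermediate fact, extracted from the proof of Theorem~\ref{mixed}, is that the square matrix $N_0$ on $G$ whose columns consist of exactly $b_i$ copies of $A_G(v_i)$ for each vertex has
\[
\per(N_0)=\pm\prod_i b_i!\not\equiv 0\pmod{d+1}.
\]
To derive this, I Laplace-expand $\per(M_n)$ from Theorem~\ref{mixed} along the leaf-edge rows of $G'$: each leaf row attached to $v_i$ has nonzero entries only in the $d$ copies of $A_{G'}(v_i)$, so collectively those $d-b_i$ leaf rows consume $d-b_i$ of the $d$ copies in $d!/b_i!$ ordered ways. The residual $|E(G)|\times|E(G)|$ submatrix on $E(G)$-rows and the remaining $b_i$ copies of $A_G(v_i)$ is exactly $N_0$. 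Since $|\per(M_n)|=(d!)^{|V(G)|}$, the identity $(d!)^{|V|}=\prod_i(d!/b_i!)\cdot|\per(N_0)|$ gives $\per(N_0)=\pm\prod_i b_i!$, a unit modulo the prime $d+1$.

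To upgrade $\eta_0(v_i)=b_i$ (which may reach $d$) into an index function with $\eta^*(v)\le d-1$, for each $i\in S$ fix an edge $e_i=v_iv_{j_i}$ incident to $v_i$ and define
\[
\eta^*(v_i)=\min(b_i,d-1),\qquad\eta^*(e_i)=1\text{ for }i\in S,\qquad\eta^*(z)=0\text{ otherwise.}
\]
A quick count gives $\sum\eta^* = d|S|+\sum_{i\notin S}b_i=|E|$, and the $(d,2)$-bounds $\eta^*(v)\le d-1,\eta^*(e)\le 1$ hold. Expanding $A_G(e_i)=A_G(v_i)+A_G(v_{j_i})$ by multilinearity gives $\per(A_G(\eta^*))=\sum_{T\subseteq S}\per(M_T)$, where $M_T$ substitutes $A_G(v_{j_i})$ for $i\in T$ and $A_G(v_i)$ for $i\in S\setminus T$; the main term $M_\emptyset$ equals $N_0$, contributing a nonzero residue mod $d+1$.

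The main obstacle is to show $\per(M_T)\equiv 0\pmod{d+1}$ for every $T\ne\emptyset$. Any $M_T$ containing at least $d+1$ identical columns has $(d+1)!\mid\per(M_T)$ and hence vanishes modulo $d+1$; this handles the generic case where the substitution makes some vertex multiplicity overflow. The delicate case is a singleton $T=\{i_0\}$ whose back-neighbor $v_{j_{i_0}}$ lies outside $S$, where no single multiplicity exceeds $d$. To resolve this, the plan is to choose the edges $e_i$ greedily, preferring back-edges with $v_{j_i}\in S$ when possible and otherwise substituting a forward edge $v_iv_k$ with $v_k\in S$ through $A_G(v_i)=A_G(v_iv_k)-A_G(v_k)$, and then to combine the surviving problematic terms via an inclusion--exclusion or vertex-by-vertex iterated replacement, in the spirit of the proof of Theorem~\ref{mixed}, so that all problematic contributions collapse back into the main term modulo $d+1$.
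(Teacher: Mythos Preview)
Your approach diverges from the paper's in a structural way, and the divergence is exactly where the argument breaks down.

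The paper works with the index function $\eta$ of Theorem~\ref{mixed}, which already has $\eta(e)=1$ for every edge of $G$ and $\eta(v_i)=d-d^-(v_i)$. Since $G$ may be assumed connected and the ordering can be taken with $d^-(v_i)\ge 1$ for $i\ge 2$, only \emph{one} vertex, $v_1$, has $\eta(v_1)=d$; every other vertex already satisfies the $(d,2)$ bound. The entire proof is then devoted to repairing this single vertex via the ``comb-plus'' construction: one expands along leaf-edge rows of $G'$ to grow a path $w_1=v_1,w_2,\dots,w_p$ through vertices with $\eta(w_j)=d-1$ until it closes into a cycle, and that cycle is used to rewrite one copy of $A(w_1)$ as an alternating sum of edge and vertex columns, lowering $\eta(v_1)$ to $d-1$ without pushing any other vertex past $d-1$.

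Your route instead starts from the all-vertex matrix $N_0$ and must repair \emph{every} vertex in $S=\{i:b_i=d\}$ simultaneously. Your derivation of $|\per(N_0)|=\prod_i b_i!$ from $\per(M_n)$ is correct, and the multilinear expansion $\per(A_G(\eta^*))=\sum_{T\subseteq S}\per(M_T)$ is set up properly. But the claim that $\per(M_T)\equiv 0\pmod{d+1}$ for all nonempty $T$ is not established, and your greedy plan does not suffice. Take $d=2$ and $G=K_3$ with $b_1=0,\,b_2=1,\,b_3=2$, so $S=\{3\}$. Neither back-neighbour of $v_3$ lies in $S$ and $v_3$ has no forward neighbour, so your preference rules give no guidance. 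Choosing $e_3=v_2v_3$ yields $\per(M_{\{3\}})=-2\equiv 1\pmod 3$ and hence $\per(A_G(\eta^*))=0$, so this particular $\eta^*$ is singular; choosing $e_3=v_1v_3$ happens to work, but nothing in your proposal singles it out. The closing appeal to ``inclusion--exclusion or vertex-by-vertex iterated replacement'' is a hope, not an argument: there is no mechanism shown by which the problematic $M_T$ terms collapse modulo $d+1$. The paper avoids this difficulty entirely because its starting $\eta$ has only one overflowing vertex, and the comb-plus machinery is tailored to that localized repair.
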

\begin{proof}
Assume Theorem \ref{ddeg} is not true, and $G$ is a connected
$d$-degenerate graph which is not $(d,2)$-choosable.  Let
  $v_1, v_2, \ldots, v_n$ be an ordering of the vertices of $G$
  such that  $1 \le d^-(v_i) \le d$ for $2 \le i \le n$ (note that $d^-(v_1)=0$).
Let $G'$ by obtained from $G$ by adding $d-d^-(v_i)$ leaf neighbours
to $v_i$ for $i=1,2,\ldots, n$.

Let $\eta$ be the index function of $G'$ such that $\eta(e)=1$ for
every edge $e$ of $G$, $\eta(v_i)=d-d^-(v_i)$ for every vertex of
$G$, and $\eta(z)=0$ for all the added vertices and edges $z$. Since
$1 \le d^-(v_i) \le d$ for $i>1$, hence $\eta(v_i)\leq d-1$ for
every vertex of $G$ except that $\eta(v_1)=d$.

Since $|E(G')|=|E(G)|+\sum_{i=1}^n (d-d^-(v_i))$,  $A_{G'}(\eta)$ is
a
square matrix. By 
Theorem \ref{mixed}, we have $\per(A_{G'}(\eta)) \ne 0 \pmod{d+1}$.

It follows from Lemma \ref{subgraph} that there is a non-singular
index function $\eta'$ of $G$ with $\eta'(z) \le \eta(z)$ for $z \in
V(G) \cup E(G)$. In the following, we shall further prove that there
is such an index function $\eta'$ for which $\eta'(v_1)$ is strictly
less than $\eta(v_1)$. Hence   $\eta'(z) \le d-1$ for all $z \in
V(G)$ and $\eta'(z) \le 1$ for all $z \in E(G)$ and hence $G$ is
$(d,2)$-choosable, which is in contrary to our assumption.

We define a {\em comb-plus subgraph} of $G'$ as a subgraph indicated
in Figure \ref{figcomb}, where   $(w_1, w_2, \ldots, w_p)$ is a path
in $G$,  $w_p$ adjacent to $w_s$ for some $1 \le s \le p-2$, and
$e'_j=w_ju_j \in E(G')-E(G)$ for $j=1,2,\ldots, p$.

\begin{figure}[ht]
\begin{center}
\scalebox{0.36}{\includegraphics{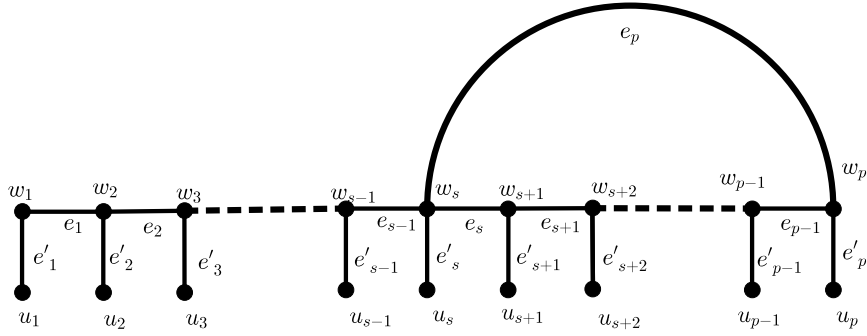}}
\caption{The comb-plus
subgraph}
\label{figcomb}
\end{center}
\end{figure}

 \begin{claim}
 \label{clm}
There is a  comb-plus subgraph of $G'$ as in Figure \ref{figcomb}
for which the following hold:
  \begin{itemize}
 \item $\eta(w_1)=d$ and  $\eta(w_j)=d-1$ for $2 \le j \le p$ and $\eta(e_j)=1$ for $1 \le j \le p $, where $e_j=w_jw_{j+1}$ for $1 \le j \le p-1 $
 and $e_p=w_pw_s$.
 \item For $0 \le i \le p$, $\per(A_{H_i}(\eta_i)) \ne 0 \pmod{d+1}$, where   $H_i=G'-\{e'_1,e'_2,\ldots, e'_i\}$, and $\eta_i = \eta$, except that $\eta_i(e_j)=0$ for $1 \le j \le i$.
 \end{itemize}
 \end{claim}

\begin{proof}
We choose the vertices $w_1, w_2, \ldots, w_p$, and hence the edges
$e'_1, e_1, e'_2, e_2,\ldots, e'_p, e_p$, recursively. Initially let
$w_1=v_1$. Let $e'_1=w_1u_1$ be an added edge incident to $w_1$
(recall that $v_1$ is incident to $d$ added  edges). Note that
$H_0=G'$ and $\eta_0=\eta$.

Calculating $\per(A_{H_0}(\eta_0))$ by expanding along the row
indexed by $e'_1$, we conclude that there is a column of
$A_{H_0}(\eta_0)$ indexed by $z \in V(G) \cup E(G)$  such that
$$A_{H_0}(\eta_0)[e'_1, z] \ne 0 \pmod{d+1} \ \ \mbox{\rm and} \ \  \per(A_{H_0}(\eta_0)[\overline{e'_1}, \overline{z}]) \ne 0 \pmod{d+1}.$$
As $A_{H_0}(\eta_0)[e'_1, z] \ne 0$, we know that either $z=w_1$ or
$z$ is an edge of $G$ incident to $w_1$.

Note that $H_1=H_0-e'_1$, hence
 $$A_{H_0}(\eta_0)[\overline{e'_1}, \overline{z}] =A_{H_1}(\eta_1)$$ where $\eta_1$   agrees with
 $\eta_0$, except that $\eta_1(z) = \eta_0(z)-1$.

If $z=w_1$,  then  $\eta_1(w_1) = d-1$. It follows from Lemma
\ref{subgraph} that there is a non-singular index function $\eta'$
for which $\eta'(z ) \le d-1$ for all $z \in V(G) $ and $\eta'(z )
\le 1$ for all $z \in  E(G)$, and hence $G$ is $(d,2)$-choosable,
contrary to our assumption.

Assume $z$ is an edge of $G$ incident to $w_1$. Let $w_2$ be the
other end vertex of $z$, and let $e_1= z=w_1w_2$. If $\eta(w_2)\leq
d-2$, then  write the column $A_{H_1}(w_1)$ of $A_{H_1}(\eta_1)$ as
$A_{H_1}(e_1)-A_{H_1}(w_2)$.
   By this
expression of the matrix $A_{H_1}(\eta_1)$, we have
$\eta_{A_{H_1}(\eta_1)}(z) \le d-1$ for all $z \in V(G)$ and
$\eta_{A_{H_1}(\eta_1)}(z) \le 1$ for all $z \in E(G)$ and
$\eta_{A_{H_1}(\eta_1)}(z) = 0$ for all $z \notin V(G) \cup E(G)$.
As $\per(A_{H_1}(\eta_1)) \ne 0$,  by Lemma \ref{subgraph}, $G$ is
$(d,2)$-choosable, contrary to our assumption.

Thus we may assume that $\eta(w_2)=d-1$.

Assume $i \geq 1$, and we have chosen distinct vertices $w_1, w_2,
\ldots, w_i$, edges $e'_1, e'_2, \ldots, e'_i$ and $e_1=w_1w_2,
e_2=w_2w_3, \ldots, e_i=w_iw_{i+1}$, for which the following hold:
\begin{itemize}
\item   $\eta(w_1)=d$ and
$\eta(w_j) = d-1$ for $2 \le j \le i+1$ and $\eta(e_j)=1$ for $1 \le
j \le i$.
\item  $\eta_j=\eta_{j-1}$ except that $\eta_j(e_j)=\eta_{j-1}(e_j)-1=0$ for $1 \le j\le i$.
\item  For $0 \le j \le i$, $\per(A_{H_j}(\eta_j)) \ne 0 \pmod{d+1}$.
\end{itemize}
If $w_{i+1}=w_s$ for some $1 \le s \le i-2$, then let  $p=i$, and
the claim is proved. Assume $w_{i+1} \ne w_j$ for any $1 \le j \le
i-2$. Since $\eta(w_{i+1})=d-1$, $d-d^-(w_{i+1})=d-1$ and there is
an edge $e'_{i+1}=w_{i+1}u_{i+1} \in E(G')-E(G)$. As $w_{i+1} \ne
w_j$ for any $1 \le j \le i-2$, we have $e'_{i+1} \in E(H_i)$.

Calculating $\per(A_{H_i}(\eta_i))$ by expanding along the row
indexed by $e'_{i+1}$, we conclude that there is a column of
$A_{H_i}(\eta_i)$ indexed by $z \in V(G) \cup E(G)$  such that
$$A_{H_i}(\eta_i)[e'_{i+1}, z] \ne 0 \pmod{d+1}, \     \per(A_{H_i}(\eta_i)[\overline{e'_{i+1}}, \overline{z}]) \ne 0 \pmod{d+1}.$$
Similarly, $A_{H_i}(\eta_i)[e'_{i+1}, z] \ne 0$ implies that either
$z=w_{i+1}$ or $z$ is an edge of $G$ incident to $w_{i+1}$.

 As $H_{i+1}=H_i-e'_{i+1}$, we have
 $$A_{H_i}(\eta_i)[\overline{e'_{i+1}}, \overline{z}] =A_{H_{i+1}}(\eta_{i+1})$$ where $\eta_{i+1}$ is an index function which agrees with
 $\eta_i$, except that $\eta_{i+1}(z) = \eta_i(z)-1$.

If $z=w_{i+1}$,   then  $\eta_{i+1}(w_{i+1}) = d-2$. In
$A_{H_{i+1}}$,
$$A_{H_{i+1}}(w_1)=A_{H_{i+1}}(e_1)-A_{H_{i+1}}(e_2)+A_{H_{i+1}}(e_3)-\ldots +(-1)^{i-1}A_{H_{i+1}}(e_{i})+(-1)^{i}A_{H_{i+1}}(w_{i+1}).$$
By this expression of the columns of $A_{H_{i+1}}(\eta_{i+1})$, the
column $A_{H_{i+1}}(z)$ occurs at most $d-1$ times for each $z \in
V(G)$ and the column $A_{H_{i+1}}(z)$ occurs at most once for each
$z \in E(G)$. For each $z \notin V(G) \cup E(G)$,  the column
$A_{H_{i+1}}(z)$ does not occur. By Lemma \ref{subgraph}, $G$ is
$(d,2)$-choosable, contrary to our assumption.

Assume $z$ is an edge of $G$ incident to $w_{i+1}$. Let $w_{i+2}$ be
the other end vertex of $z$ and let $e_{i+1}=z=w_{i+1}w_{i+2}$.
 If $\eta_{i+1}(w_{i+2})\leq d-2$, then in $A_{H_{i+1}}(\eta_{i+1})$,
$$A_{H_{i+1}}(w_1)=A_{H_{i+1}}(e_1)-A_{H_{i+1}}(e_2)+A_{H_{i+1}}(e_3)-\ldots +(-1)^iA_{H_{i+1}}(e_{i+1})+(-1)^{i+1}A_{H_{i+1}}(w_{i+2}),$$
which again leads to a contradiction. Thus  $\eta_{i+1}(w_{i+2})
=d-1$.

This process of finding new vertices $w_j$ will eventually stop (as
$G$ is finite), and at the end we obtain the required comb-plus
subgraph. This completes the proof of Claim \ref{clm}.
\end{proof}

Assume first that $s=1$, and hence  $C= ( w_1 , w_2,   \cdots, w_p)$
is a cycle. By definition, $\eta_p(w_1)=d$, $\eta_p(w_i)=d-1$ for $2
\le i \le p$ and $\eta_p(e_i)=0$ for $1 \le i \le p$.

\begin{claim}
\label{cl2} Let $\eta'_p=\eta_p$ except that
$\eta'_p(w_2)=\eta_p(w_2)-1=d-2$ and $\eta'_p(e_1)=\eta_p(e_1)+1=1$.
$$\per(A_{H_p}(\eta'_p)) \ne 0 \pmod{d+1}.$$
\end{claim}
\begin{proof}
To prove this claim, we write the column $A_{H_p}(e_1)$ as
$A_{H_p}(w_1)+A_{H_p}(w_2)$. By linearity of permanent with respect
to columns, $$\per(A_{H_p}(\eta'_p)) = \per(A') + \per(A''),$$ where
$A' $ is the matrix obtained from $A_{H_p}(\eta'_p)$ by replacing
the column $A_{H_p}(e_1)$ with $A_{H_p}(w_1)$, and $A'' $ is the
matrix obtained from $A_{H_p}(\eta_p)$ by replacing  the column
$A_{H_p}(e_1)$ with $A_{H_p}(w_2)$. Thus $A'' = A_{H_p}(\eta_p)$ and
$A'$ contains $d+1$ copies of the column
 $A_{H_p}(w_1)$.  Hence $\per(A') = 0 \pmod{d+1}$.
Therefore, $\per(A_{H_p}(\eta'_p)) = \per(A'') \pmod{d+1} =
\per(A_{H_p}(\eta_p)) \pmod{d+1} \ne 0 \pmod{d+1}$. This completes
the proof of Claim \ref{cl2}.
\end{proof}

Now in $A_{H_p}(\eta'_p)$, we re-write the columns as follows:
\begin{eqnarray*}
A_{H_p}(w_3) &=& A_{H_p}(e_2)-A_{H_p}(w_2),\\
A_{H_p}(w_4) &=& A_{H_p}(e_3)-A_{H_p}(w_3),\\
\ldots\\
A_{H_p}(w_p) &=& A_{H_p}(e_{p-1})-A_{H_p}(w_{p-1}),\\
A_{H_p}(w_1) &=& A_{H_p}(e_p)-A_{H_p}(w_p).
\end{eqnarray*}
By using these expressions, in the matrix $A_{H_p}(\eta'_p)$, the
column $A_{G'}(z)$ occurs at most $d-1$ times for each $z \in V(G)$
and the column  $A_{G'}(z)$ occurs at most once for each $z \in
E(G)$. For each $z \notin V(G) \cup E(G)$,  the column occurs $0$
times. By Lemma \ref{subgraph}, $G$ is $(d,2)$-choosable

Assume next that $s \ge 2$. Then the  path $P'=( w_1, w_2, \cdots,
w_s)$ connect $w_1$ to a cycle  $C=(w_s, w_{s+1}, \ldots, w_p)$. In
$A_{H_p}(\eta_p)$, write one copy of $A_{H_p}(w_1)$ as
$$A_{H_p}(e_1)-A_{H_p}(e_2)+\ldots +(-1)^sA_{H_p}(e_{s-1})+(-1)^{s+1}A_{H_p}(w_s).$$
By using this expression and by linearity of permanent with respect
to columns, we obtain an index function $\eta'$ of $H_p$ in which
$\eta'(z) \le d-1$ for all $z \in V(G)$ except that possibly
$\eta'(w_s)=d$, and $\eta'(z) \le 1$ for all $z \in E(G)$,  such
that $\per(A_{H_p}(\eta')) \ne 0 \pmod{d+1}$. Moreover, for this
index function $\eta'$, we have $\eta'(w_i)=d-1$ for $s+1 \le i \le
p$, $\eta'(e_i)=0$ for $s \le i \le p$. This is the same as the
$s=1$ case, and the proof is complete.
\end{proof}
\begin{corollary}
\label{2deg} Every $2$-degenerate graph  is $(2,2)$-choosable.
\end{corollary}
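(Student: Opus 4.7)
The plan is to simply specialize Theorem \ref{ddeg} to the case $d=2$. The hypothesis of that theorem requires that $d+1$ be a prime number, and indeed $d+1 = 3$ is prime. The hypothesis also requires $G$ to be $d$-degenerate, which in our setting is precisely $2$-degenerate, matching the assumption of the corollary. Therefore all hypotheses of Theorem \ref{ddeg} are satisfied with $d=2$.

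Applying Theorem \ref{ddeg} directly yields that $G$ is $(d,2)$-choosable, i.e., $(2,2)$-choosable, which is exactly the conclusion of the corollary. So the proof is a one-line invocation with no calculation needed beyond observing that $3$ is prime.

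The only potential subtlety is that Theorem \ref{ddeg}'s proof (which handles the general $d$-degenerate case) assumes $G$ is connected and uses an ordering $v_1, v_2, \ldots, v_n$ with $1 \le d^-(v_i) \le d$ for $i \ge 2$; however, the reduction to the connected case is standard (a graph is $(k,k')$-choosable iff each of its connected components is, since the proper total weighting condition only involves edges), and any $2$-degenerate graph admits such a vertex ordering by definition. Thus no additional argument is required, and the corollary follows immediately.
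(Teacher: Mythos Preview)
Your proposal is correct and matches the paper's approach exactly: the corollary is stated immediately after Theorem \ref{ddeg} with no separate proof given, as it is simply the specialization $d=2$ (so $d+1=3$ is prime). Your remark about reducing to connected components is a harmless elaboration of what the paper leaves implicit.
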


\section{Graphs with bounded maximum average degree}

 The {\em average
degree} $\bar{d}(G)$ of $G$ is $\bar{d}(G)=\frac{2|E(G)|}{|V(G)|}$.
The {\em maximum average degree} of $G$, denoted by ${\rm mad}(G)$,
is defined as ${\rm mad}(G)=\max\{\bar{d}(H):H\subseteq G\}$. This
section
proves that if ${\rm mad}(G) \le 2k$ for some integer $k$, then $G$
is $(k+1, 2)$-choosable.

\begin{lemma}
\label{orientation} Assume $D$ is an orientation of a graph $G$, and
$\eta$ is the index function defined as
 $\eta(v)=d_D^+(v)$ for every vertex $v$ and $\eta(e)=1$ for every edge $e$. Then $\eta$ is a non-singular index function
 of $G$.
 \end{lemma}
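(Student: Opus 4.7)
The plan is to prove Lemma~\ref{orientation} by aggregating many candidate valid index functions $\eta'\le\eta$ into a single auxiliary square matrix $\widetilde{M}$ whose nonvanishing permanent, via the column-linearity~(2), will force at least one such $\eta'$ to have $c_{\eta'}=\per(A_G(\eta'))\ne 0$. The key mechanism is identity~(1), $A_G(e)=A_G(u)+A_G(v)$, which lets a permanent with an edge-column split into a sum of permanents with the column replaced by vertex-columns.

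Concretely, for each oriented edge $e=(u,v)\in E(D)$ I set $C_e := A_G(e)+A_G(u)$, which by~(1) equals $2A_G(u)+A_G(v)$, and form the $|E|\times|E|$ matrix $\widetilde{M}$ whose $e$-th column is $C_e$. Splitting each $C_e$ into $A_G(u)$ and $A_G(e)$ and applying~(2) column by column gives
\[
\per(\widetilde{M}) \;=\; \sum_{S\subseteq E(D)}\per(M_S),
\]
where $M_S$ has $e$-th column $A_G(e)$ for $e\in S$ and $A_G(u)$ (the tail of $e$ in $D$) for $e\notin S$. Each $M_S$ equals $A_G(\eta_S)$ for the index function with $\eta_S(e)=\mathbf{1}[e\in S]\le 1$ and $\eta_S(v)=|\{e\in E(D): \text{tail}_D(e)=v,\ e\notin S\}|\le d^+_D(v)$, which satisfies $\eta_S\le\eta$ and $\sum_z\eta_S(z)=|E|$ (hence is a valid index function). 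Since every coefficient in the expansion is $+1$, once I know $\per(\widetilde{M})\ne 0$ some $\per(A_G(\eta_S))\ne 0$, so $c_{\eta_S}\ne 0$ and $\eta$ is non-singular.

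The remaining core claim is $\per(\widetilde{M})\ne 0$ for every orientation $D$; I plan to prove this by induction on $|E(G)|$, with the base $|E|=1$ yielding the $1\times 1$ matrix $[-1]$, nonzero. In the inductive step, when $G$ has a pendant vertex $v$ with unique incident edge $e^*$, I would expand $\per(\widetilde{M})$ along the $e^*$ row using the cofactor notation $A_G[\overline{e^*},\overline{z}]$ introduced earlier: that row has nonzero entries only in columns associated to edges touching $v$, and the cofactor against the column $C_{e^*}$ reproduces the $\widetilde{M}$-matrix for $G-v$, nonzero by induction. For pendant-free $G$ the orientation $D$ necessarily contains a directed cycle, and I would pick $e^*$ on such a cycle and perform a similar cofactor expansion. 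The hard part will be controlling cancellations in this pendant-free cyclic case: the entries of $\widetilde{M}$ lie in $\{0,\pm 1,\pm 2\}$, so sign cancellations are a priori possible in the permanent expansion. Small cyclic checks (e.g., $K_3$ cyclic gives $\per(\widetilde{M})=12$; $C_4$: $18$; $C_5$: $20$; $C_6$: $74$) all confirm non-vanishing, but the general argument will likely need either a structural sign-coherence lemma for $\widetilde{M}$ or an alternative proof via a spanning-tree decomposition chosen so that cancellations are structurally impossible.
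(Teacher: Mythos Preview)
Your reduction is correct: if $\per(\widetilde{M})\ne 0$ then column-linearity forces some $\per(A_G(\eta_S))\ne 0$ with $\eta_S\le\eta$, and that suffices. The trouble is that you have not established $\per(\widetilde{M})\ne 0$. In the pendant step your claim that ``row $e^*$ has nonzero entries only in columns associated to edges touching $v$'' is false: if $v$ is a leaf with neighbour $w$, then for every edge $e'\ne e^*$ incident to $w$ one has $A_G[e^*,e']=\pm 1$, so the $e^*$-row of $\widetilde{M}$ has a nonzero entry in every column $C_{e'}$ with $e'$ incident to $w$. Moreover, the cofactor $\widetilde{M}[\overline{e^*},\overline{C_{e^*}}]$ does not equal the $\widetilde{M}$-matrix of $G-v$: its columns are still built from $A_G$, not from $A_{G-v}$, and these differ in the rows indexed by edges at $w$. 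So the pendant case does not reduce to the inductive hypothesis in the way you sketch, and the pendant-free cyclic case you yourself leave open. The numerical checks are encouraging but do not constitute an argument.

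The paper bypasses all of this with a two-line idea you may find instructive. First, for an \emph{acyclic} orientation $D'$ it shows the stronger fact that the index function $\eta'(v)=d^+_{D'}(v)$, $\eta'(e)=0$ already satisfies $\per(A_G(\eta'))\ne 0$: induct on $|V(G)|$ by deleting a source (which always exists), observing that the added $d_G(v)$ rows are matched only by the $d_G(v)$ new copies of $A_G(v)$, giving $\per(A_G(\eta'))=d_G(v)!\cdot\per(A_{G-v}(\eta'))$. Second, for an arbitrary $D$, take any acyclic $D'$ and, for every edge $e=(u,v)$ of $D'$ that is reversed in $D$, rewrite one copy of $A_G(u)$ in $A_G(\eta')$ as $A_G(e)-A_G(v)$. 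This does not change the matrix at all, yet in the new expression each $A_G(e)$ appears at most once and each $A_G(w)$ appears exactly $d^+_D(w)$ times. Hence $\eta$ is non-singular. This avoids your auxiliary matrix $\widetilde{M}$ and the delicate cancellation analysis entirely.
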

 \begin{proof}
 First we prove that the lemma is true if $D$ is an acyclic orientation.
 In this case, we prove that the index function $\eta$ defined as $\eta(v)=d^+_D(v)$ for each vertex $v$ and
 $\eta(e)=0$ for each edge $e$ is a valid index function with $\per(A_G(\eta)) \ne 0$.

 Assume this is not true and $G$ is a minimum counterexample. As $D$ is acyclic,
 there is a source vertex $v$. By the minimality of $G$,  the restriction $\eta'$ of $\eta$ to $G-v$
 is a non-singular index function of $G-v$.    We extend the matrix $A_{G-v}(\eta')$  to $A_G(\eta)$ by adding
 $d_G(v)$ rows indexed by edges incident to $v$, and  adding $d_G(v)=d_D^+(v)$
 copies of the column $A_G(v)$. Then $\per(A_G(\eta)) = d_G(v) ! \per(A_{G-v}(\eta')) \ne 0$.

 Next we consider the case that $D$ is an arbitrary orientation. Let $D'$ be an acyclic orientation of $G$.
 Let $\eta'$ be the index function defined as $\eta'(v)=d^+_{D'}(v)$ for each vertex $v$ and
 $\eta'(e)=0$ for each edge $e$. By the previous paragraph, $\per(A_G(\eta')) \ne 0$.
 For each directed edge $e=(u, v)$ of $D'$ that is oriented differently in $D$,
 we replace a copy of the column $A_G(u)$ by the linear combination $A_G(e)-A_G(v)$.
 Note that the matrix is not changed, because $A_G(u)=A_G(e)-A_G(v)$.
 However, in such linear combinations of the columns of $A_G(\eta')$, for each edge $e$, $A_G(e)$ occurs at most once,
 and for each vertex $v$, $A_G(v)$ occurs at most $d_D^+(v)$ times. Therefore the index function defined as
 $\eta(v)=d_D^+(v)$ for every vertex $v$ and $\eta(e)=1$ for every edge $e$ is a non-singular index function
 of $G$.
 \end{proof}

\begin{corollary}
If ${\rm mad}(G) \le 2k$, then $G$ is $(k+1, 2)$-choosable. In
particular, planar graphs are $(4,2)$-choosable and planar bipartite
graphs are $(3,2)$-choosable.
\end{corollary}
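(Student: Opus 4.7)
The plan is to combine Lemma \ref{orientation} with a classical orientation theorem of Hakimi. The chain of implications is: ${\rm mad}(G) \le 2k$ gives an orientation $D$ with maximum out-degree at most $k$, which by Lemma \ref{orientation} yields a non-singular index function with $\eta(v) \le k$ and $\eta(e) \le 1$, which via Combinatorial Nullstellensatz is exactly what $(k+1,2)$-choosability requires.

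First I would invoke (or give a short argument for) Hakimi's theorem: a graph $H$ admits an orientation with maximum out-degree at most $k$ if and only if $|E(H')| \le k |V(H')|$ for every subgraph $H' \subseteq H$. The condition ${\rm mad}(G) \le 2k$ is precisely $2|E(H')|/|V(H')| \le 2k$ for every $H' \subseteq G$, i.e., $|E(H')| \le k|V(H')|$, so Hakimi's theorem supplies an orientation $D$ of $G$ with $d_D^+(v) \le k$ for all $v$. Applying Lemma \ref{orientation} to this $D$, the index function $\eta$ with $\eta(v) = d_D^+(v) \le k$ and $\eta(e) = 1$ is non-singular, so there is a valid index function $\eta' \le \eta$ with $c_{\eta'} \ne 0$. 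Combinatorial Nullstellensatz then guarantees a proper $L$-total weighting for any list assignment $L$ with $|L(v)| \ge k+1$ and $|L(e)| \ge 2$, proving $G$ is $(k+1,2)$-choosable.

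For the two special cases I would just use Euler's formula. Every subgraph of a planar graph with at least three vertices satisfies $|E(H')| \le 3|V(H')| - 6 < 3|V(H')|$, so ${\rm mad}(G) < 6 \le 2 \cdot 3$; taking $k=3$ yields $(4,2)$-choosability. Every subgraph of a planar bipartite graph with at least three vertices satisfies $|E(H')| \le 2|V(H')| - 4 < 2|V(H')|$, so ${\rm mad}(G) < 4 \le 2 \cdot 2$; taking $k=2$ yields $(3,2)$-choosability. Trivial small subgraphs (at most two vertices) are handled directly.

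There is no genuine obstacle here: once Lemma \ref{orientation} is available, this corollary is essentially a one-line reduction to Hakimi's orientation theorem. The only point requiring a little care is to quote (or sketch) Hakimi's theorem correctly, whose standard proof proceeds by a flipping argument: start with any orientation, and while some vertex has out-degree exceeding $k$, a counting argument on the subgraph of ``reachable'' vertices under directed paths contradicts $|E(H')| \le k|V(H')|$, so one can always flip a directed path to decrease the maximum out-degree.
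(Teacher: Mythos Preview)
Your proposal is correct and follows essentially the same approach as the paper: both invoke the well-known orientation result (you name it as Hakimi's theorem) to obtain an orientation with maximum out-degree at most $k$, apply Lemma \ref{orientation}, and conclude via Combinatorial Nullstellensatz. Your version is more explicit about Hakimi's theorem and the Euler-formula computations for the planar cases, but the argument is the same.
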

\begin{proof}
It is well-known that if $G$ has maximum average degree at most
$2k$, then $G$ has an orientation with maximum out-degree at most
$k$. Therefore the index function $\eta$  defined as
 $\eta(v)=k$ for every vertex $v$ and $\eta(e)=1$ for every edge $e$ is a non-singular index function
 of $G$. It follows from the argument in the introduction that $G$ is $(k+1,2)$-choosable.
 \end{proof}

\end{document}